\def\mindex#1{\index{#1}}
\def\sq{\hbox{\rlap{$\sqcap$}$\sqcup$}}
\def\qed{\ifmmode\sq\else{\unskip\nobreak\hfil
\penalty50\hskip1em\null\nobreak\hfil\sq
\parfillskip=0pt\finalhyphendemerits=0\endgraf}\fi\medskip}
\long\def\defbox#1{\framebox[.9\hsize][c]{\parbox{.85\hsize}{%
\parindent=0pt
\baselineskip=12pt plus .1pt      
\parskip=6pt plus 1.5pt minus 1pt 
 #1}}}
\long\def\beginbox#1\endbox{\subsection*{}%
\hbox{\hspace{.05\hsize}\defbox{\medskip#1\bigskip}}%
\subsection*{}}
\def\endbox{}
\newsavebox{\junk}
\savebox{\junk}[1.6mm]{\hbox{$|\!|\!|$}}
\def\limsup{\mathop{\rm lim\ sup}}
\def\liminf{\mathop{\rm lim\ inf}}
\def\state{{\mathbb X}}
\def\bx{{{\cal B}(\mathbb{X})}}
\newcommand{\field}[1]{\mathbb{#1}}
\def\Re{\field{R}}
\def\bfmath#1{{\mathchoice{\mbox{\boldmath$#1$}}%
{\mbox{\boldmath$#1$}}%
{\mbox{\boldmath$\scriptstyle#1$}}%
{\mbox{\boldmath$\scriptscriptstyle#1$}}}}
\def\bfmY{\bfmath{Y}}
\def\bfmhhaY{\bfmath{\hhaY}} 
\def\bfmhhaY{\hbox to 0pt{$\widehat{\bfmY}$\hss}\widehat{\phantom{\raise 1.25pt\hbox{$\bfmY$}}}}
\def\til={{\widetilde =}}
\def\clB{{\cal B}}
\def\clF{{\cal F}}
\def\clM{{\cal M}}
 \def\FRAC#1#2#3{\genfrac{}{}{}{#1}{#2}{#3}}
\def\ddtp{{\mathchoice{\FRAC{1}{d^{\hbox to 2pt{\rm\tiny +\hss}}}{dt}}%
{\FRAC{1}{d^{\hbox to 2pt{\rm\tiny +\hss}}}{dt}}%
{\FRAC{3}{d^{\hbox to 2pt{\rm\tiny +\hss}}}{dt}}%
{\FRAC{3}{d^{\hbox to 2pt{\rm\tiny +\hss}}}{dt}}}}
\def\half{{\mathchoice{\FRAC{1}{1}{2}}%
{\FRAC{1}{1}{2}}%
{\FRAC{3}{1}{2}}%
{\FRAC{3}{1}{2}}}}
\def\eqdef{\mathbin{:=}}
\def\Prob{P}
\def\Expect{E}
\def\average#1,#2,{{1\over #2} \sum_{#1}^{#2}}
\def\eye(#1){{\bf(#1)}\quad}
\newtheorem{theorem}{Theorem}[section]
\newtheorem{corollary}[theorem]{Corollary}
\newtheorem{proposition}[theorem]{Proposition}
\newtheorem{lemma}[theorem]{Lemma}
\def\Theorem#1{Theorem~\ref{#1}}
\def\eq#1/{(\ref{e:#1})}
\newcommand{\beqn}[1]{\notes{#1}%
\begin{eqnarray} \elabel{#1}}
\newcommand{\eeqn}{\end{eqnarray} }
\newcommand{\beq}[1]{\notes{#1}%
\begin{equation}\elabel{#1}}
\newcommand{\eeq}{\end{equation}}
\def\bdes{\begin{description}}
\def\edes{\end{description}}
\newcounter{rmnum}
\newenvironment{romannum}{\begin{list}{{\upshape (\roman{rmnum})}}{\usecounter{rmnum}
\setlength{\leftmargin}{14pt}
\setlength{\rightmargin}{8pt}
\setlength{\itemindent}{-1pt}
}}{\end{list}}
\newcounter{anum}
\def\ass(#1:#2){(#1\ref{#1:#2})}
\def\ritem#1{
\item[{\sf \ass(\current_model:#1)}]
}
\newenvironment{recall-ass}[1]{%
\begin{description}
\def\current_model{#1}}{
\end{description}
}
\newcommand{\bd}{\begin{description}}
\newcommand{\ed}{\end{description}}
\newcommand{\bt}{\begin{theorem}}
\newcommand{\et}{\end{theorem}}
\newcommand{\ba}{\begin{array}{rcl}}
\newcommand{\ea}{\end{array}}
\def\tau{{\cal T}}
\newlength{\noteWidth}
\long\def\notes#1{\ifinner
           {\tiny #1}
           \else
           \marginpar{\parbox[t]{\noteWidth}{\raggedright\tiny #1}}
       \fi\typeout{#1}}
\newtheorem{definition}{Definition}[section]
\newtheorem{fact}{Fact}[section]
\newtheorem{remark}{Remark}[section]
\newtheorem{assumption}{Assumption}[section]
\begin{document}

\title{On Rates of Convergence for Markov Chains under Random Time State Dependent Drift Criteria}
\author{Ramiro Zurkowski, Serdar Y\"uksel, Tam\'as Linder
\thanks{The authors are with the Department of Mathematics and Statistics, Queen's University,
  Kingston, Ontario, Canada, K7L 3N6. This research was partially supported by
  the
  Natural Sciences and Engineering Research Council of Canada (NSERC). Email: 8raz@queensu.ca,yuksel@mast.queensu.ca,linder@mast.queensu.ca.}}
\maketitle

\begin{abstract}
  Many applications in networked control require intermittent access of a
  controller to a system, as in event-triggered systems or information
  constrained control applications. Motivated by such applications and extending previous work on Lyapunov-theoretic drift criteria, we establish both
  subgeometric and geometric rates of convergence for Markov chains under
  state dependent random time drift criteria. We quantify how the rate of
  ergodicity, nature of Lyapunov functions, their drift properties, and the
  distributions of stopping times are related. We finally study an application
  in networked control.
\end{abstract}

\section{Introduction and literature review}
Stochastic stability of Markov chains has an almost complete theory, and forms a
foundation for several other general techniques such as dynamic programming,
linear programming approach to Markov Decision Processes \cite{Borkar2},  and
Markov Chain Monte-Carlo (MCMC) \cite{CTCN}. One powerful approach to establish
stochastic stability is through single-stage (Foster-Lyapunov) drift criteria
\cite{MeynBook}. The \textit{state-dependent} criteria
\cite{MeynStateDrift,malmen79,ConnorFort} relax the one-stage criteria to
criteria involving time instances which are state-dependent but
deterministic. Such criteria form the basis of the fluid-model (or ODE) approach
to stability in stochastic networks and other general models
\cite{bormey00a}, \cite{formeymoupri08a}, \cite{dai95a}, \cite{daimey95a}, \cite{CTCN}. Building
on \cite{MeynBook} and \cite{MeynStateDrift}, \cite{YukMeynTAC2010} considered
stability criteria based on a state-dependent \textit{random sampling} of the
Markov chain of the following form: It was assumed that there is positive
real-valued function $V$
on the state space $\state$ of a discrete-time Markov chain $\{x_t\}_{t\ge
  0}$, and an increasing sequence of
stopping times $\{\tau_i\}_{t\ge 0}$, with $\tau_0=0$, such that for
each~$i$,
\begin{equation}
\Expect[V(x_{\tau_{i+1}}) \mid \clF_{\tau_i}]   \le V(x_{\tau_i})  - \delta(x_{\tau_i}),
\label{e:RandomDrift}
\end{equation}
where the function $\delta\colon\state\to \Re$ is positive (bounded away from zero) outside of a ``small set'',
and $\clF_{\tau_i}$ denotes the filtration of ``events up to time $\tau_i$''. We will make this more precise later in the paper. Further relevant work include \cite{Fralix} and \cite{ConnorFort}.

Motivation for studying such problems comes from networked
control systems and communication systems: For many networked control scenarios, access to information or application of a control action in a
system is limited to random event times. As examples for such settings, there
has been significant research on stochastic stabilization of networked control
systems and information theory; as in stabilization of adaptive quantizers
studied in source coding \cite{Kieffer}, \cite{KiefferDunham} and control
theory \cite{LiberzonNesic}, \cite{BrockettLiberzon}, \cite{NairEvans}, \cite{YukTAC2010}. A specific example involving control over an erasure
channel is given in \cite{YukMeynTAC2010}, where non-zero stabilizing actions of a controller are applied to a system at certain event driven times and stochastic stability is shown using drift conditions and martingale techniques. For an extensive discussion, see
\cite{YukselBasarBook}. The methods of random-time drift criteria can also be
applied to models of networked control systems with delay-sensitive information
transmission,  for example, for studying the effects of randomness in the delay
for transmission of sensor or controller signals (see, e.g., \cite{Cloosterman}, \cite{Quevedo2}, \cite{LipsaMartins2}, \cite{WuAraposthasis}).

One other, increasingly prominent, area is {\em event-triggered
  feedback control} systems (see e.g. \cite{Event0}, \cite{QuevedoYuksel},
\cite{Event1}, \cite{Event3}, \cite{Event4}) where the event instances
constitute the stopping-times. The study of such systems is practically relevant since an event-based clock is usually more efficient than a time-triggered clock for control under information or actuation costs. The literature on such systems has primarily focused on the stabilization of such systems and we hope that the analysis in this paper will be useful for both stabilization and optimization of such systems: If the objective is to compute optimal solutions to an average cost optimization problem for an event triggered setup, a powerful approach is the {\it discounted limit approach} \cite{HernandezLermaMCP} \cite{survey}. This method typically requires geometric or sufficiently fast subgeometric convergence conditions to establish the existence of a solution to an average cost optimality equation or inequality \cite{HernandezLermaMCP}. The rate of convergence results in this paper  will be useful in such contexts. Furthermore, rates of convergence to equilibrium in Markov chains are useful in bounding the
distribution of transient events and the approximate computation of optimal costs under
ergodicity properties. In addition, as documented extensively in the literature,
Markov Chain Monte Carlo algorithms require a tedious analysis on rates of
convergence bounds to obtain probabilistically guaranteed simulation times, see,
e.g., \cite{CTCN}, \cite{Rosenthal}. Furthermore, as has been discussed in \cite{antos2008learning} and \cite{SaldiLinderYukselTAC14}, approximation methods for optimization of Markov Decision Processes benefit from the presence of sufficiently fast mixing/rates of convergence conditions. 

In this paper, we extend recent works on random-time drift analysis \cite{YukMeynTAC2010} to obtain criteria for rates of convergence under subgeometric and geometric rate functions.

The rest of the paper is organized as follows. In Section \ref{s:ss}, we provide background information on Markov chains and rates of convergence to
equilibrium. Section \ref{SectionRTSDD} contains the rate of convergence
results under random-time state-dependent drift conditions. Section
\ref{SectionE} contains an example from networked control.

\section{Markov Chains, Stochastic Stability, and Rates of Convergence}
\label{s:ss}
In this section, we review some definitions and background material
relating to Markov chains and their convergence to equilibrium.

\subsection{Preliminaries}

We let $\{x_t\}_{t\ge 0}$ denote a discrete-time Markov chain with state space
$\state$. The
basic assumptions of \cite{MeynBook} are adopted, see \cite{RamiroThesis} for a more comprehensive introduction: It is assumed that $\state$ is
a complete separable metric space; its Borel $\sigma$-field is denoted by
$\clB(\state)$.  The transition probability is denoted by $P$, so that for any
$x\in\state$, $A\in\bx$, the probability of moving in one step from the state
$x$ to the set $A$ is given by $ \Prob(x_{t+1}\in A \mid x_t=x) = P(x,A)$.  The
$n$-step transitions are obtained via composition in the usual way, $
\Prob(x_{t+n}\in A \mid x_t=x) = P^n(x,A)$, for any $n\ge1$.  The transition law
acts on measurable functions $f\colon\state\to\Re$ and measures $\mu$ on $\bx$
via  $Pf\, (x)\eqdef \int_{\state} P(x,dy) f(y),\quad x\in\state,$ and $\mu P\,
(A) \eqdef \int_{\state} \mu(dx)P(x,A)$, $A\in \clB(\state)$. A probability measure $\pi$ on $\bx$ is
called invariant if $\pi P= \pi$, i.e.,
\[
\int \pi(dx) P(x,A) = \pi(A),\qquad A\in\bx.
\]

For any initial probability measure $\nu$ on $\bx$ we can construct a stochastic
process $\{x_t\}$ with transition law $P$ satisfying $x_0\sim \nu$.  We let
$\Prob_\nu$ denote the resulting probability measure on the sample space
$(\state, \clB(\state))^{\infty}$, with the usual convention for
$\nu=\delta_{x}$ (where $\delta_x$ is the probability measure defined by $\delta_x(A)= 1_A(x)$ for all Borel $A$ and $1_E(x)$ denotes the indicator function for the event $\{x \in E\}$) when the initial state is $x\in\state$, in which case we write $\Prob_x$ for the resulting probability measure. Likewise, $E_x$ denotes the expectation operator when the initial condition is given by $x_0=x$.

When $\nu=\pi$ (the invariant measure),  the resulting process is stationary. For a set $A\in\bx$ we denote,
\begin{equation}
\tau_A\eqdef \min\{t \ge 1 :  x_t \in A\}.
\label{e:tauA}
\end{equation}
\begin{definition}
Let  $\varphi$ denote  a $\sigma$-finite measure on $\bx$. The Markov chain is
called \textit{$\varphi$-irreducible} if for any $x\in\state$, and  any
$B\in\bx$ satisfying $\varphi(B)>0$,     we have $
\Prob_{x}\{\tau_B<\infty\} >0$. A $\varphi$-irreducible Markov chain is \textit{aperiodic}  if  for any $x\in\state$, and any $B\in\bx$ satisfying $\varphi(B)>0$,   there exists $n_0=n_0(x,B)$ such that $
 P^n(x,B)>0 \qquad \hbox{\it for all \ } n\ge n_0$. A $\varphi$-irreducible Markov chain is \textit{Harris recurrent}  if   $\Prob_{x}(\tau_B < \infty  ) =1 $ for any $x\in\state$, and any $B\in\bx$ satisfying $\varphi(B)>0$.  It is \textit{positive Harris recurrent} if in addition there is an invariant probability measure $\pi$.
\end{definition}

A maximal irreducibility measure is one with respect to which all other
irreducibility measures are absolutely continuous. Define
$\clB^+(\state)=\left\{ A\in \clB(\state): \psi(A)>0\right\}$,  where $\psi$ is a maximal irreducibility measure. We refer to sets in $\clB^+(\state)$ as {\it reachable}.

A set $A \in \clB(\state)$ is full if $\psi(A^c)=0$ for a maximal irreducibility measure $\psi$. A set $A \in \clB(\state)$ is absorbing if $P(x,A)=1$ for all $x \in A$. In an irreducible Markov chain every absorbing set is {\it full}.
\begin{definition} A set $\alpha\in \clB^+(\state)$ is an atom if for all $x,y \in \alpha$ $P(x,\,\cdot\, )=P(y,\,\cdot\, )$.
\end{definition}
The concept of an atom is extremely important as it gives us a
fundamental unit, where all the points of a reachable set act together. This
allows, through the {\it cycle} equation, an invariant probability measure
$\pi(A)=E_{\alpha}\Big[\sum_{k=0}^{\tau_{\alpha}-1} 1_A(x_k)\Big] /
E_{\alpha}[\tau_{\alpha}]$. When the state space is not countable, one typically needs to
artificially construct such an atom, as we discuss further below.
\begin{definition}
A set $C \in \clB^+(\state)$  is $(n_0,\epsilon, \nu)$-small if
\[P^{n_0}(x,B)\geq \epsilon \nu(B) \quad \forall B \in \clB(\state), x\in C \]
where $n_0 \ge 1$, $\epsilon \in (0,1)$, and $\nu$ is a positive measure on
$(\state,\clB(\state))$.
\end{definition}
An important fact is that small sets exist, see Theorem 5.2.1 of
\cite{MeynBook}.
\begin{fact}\label{Small sets exist}
For an irreducible Markov chain, every set $A \in \mathcal{B^+(\state)}$ contains a small set in $\mathcal{B^+(\state)}$.
\end{fact}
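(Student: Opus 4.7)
The plan is to exhibit a small set inside the prescribed $A$ via a Radon-Nikodym minorization argument on a sampled kernel, followed by a transfer step that relocates the construction into $A$. Fix $A \in \clB^+(\state)$ and consider the sampled transition kernel $K_a(x, \cdot) := \sum_{n=1}^\infty a(n)\, P^n(x, \cdot)$ with $a(n) = 2^{-n}$. By $\psi$-irreducibility of $\{x_t\}$, $K_a(x, B) > 0$ for every $x \in \state$ and every $B \in \clB^+(\state)$; in particular $K_a(x, A) > 0$ for all $x$.

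Next, I would pick a reachable subset $B_0 \subseteq \state$ with $0 < \psi(B_0) < \infty$ and apply the Radon-Nikodym theorem to $K_a(x, \cdot \cap B_0)$ relative to $\psi|_{B_0}$. The Polish-space assumption on $\state$ allows a jointly measurable version of the density to be selected, yielding $k(x, y)$ with $K_a(x, dy) \geq k(x, y)\,\psi(dy)$ on $B_0$. Since $\int_{B_0} k(x, y)\,\psi(dy) = K_a(x, B_0) > 0$ for every $x$, an Egorov-type level-set argument applied to $k$ produces constants $\delta > 0$, a set $C \in \clB^+(\state)$, and a subset $D \subseteq B_0$ with $\psi(D) > 0$ such that $k(x, y) \geq \delta$ for all $x \in C$ and $y \in D$. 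Equivalently, $K_a(x, \cdot) \geq \delta\,\psi(\cdot \cap D)$ for $x \in C$.

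To convert this into a minorization of a single iterate $P^{n_0}$, I would truncate the series at an index $N$ large enough that the tail $\sum_{n > N} a(n)\,\psi(D)$ is at most $\delta \psi(D)/2$. The remaining finite sum $\sum_{n \leq N} a(n) P^n(x, \cdot)$ then dominates $(\delta/2)\psi(\cdot \cap D)$ on $C$, and a pigeonhole across $n \leq N$ yields some $n_0 \leq N$ together with $\epsilon > 0$ such that $P^{n_0}(x, \cdot) \geq \epsilon\,\psi(\cdot \cap D)$ on a positive-$\psi$-measure subset $C' \subseteq C$. To ensure the small set itself lies inside $A$ (rather than merely its minorizing measure), reapply $\psi$-irreducibility: there is $m \geq 1$ and a subset of positive $\psi$-measure from which $P^m$ reaches $A$ with positive probability, and a second round of the same minorization construction applied to $P^m(\cdot, \cdot \cap A)$ on this subset produces a small set contained in $A$.

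The main obstacle is the passage from the sampled kernel $K_a$ to a single power $P^{n_0}$, which is handled by the truncation-and-pigeonhole step above. A secondary technical point is joint measurability of the density $k(x, y)$; this is exactly where the Polish-space assumption on $\state$ adopted in Section~\ref{s:ss} enters, via the existence of regular conditional probabilities. Once both are in place, the transfer step into $A$ is a routine reapplication of the same minorization machinery combined with Chapman-Kolmogorov.
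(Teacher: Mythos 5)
The paper does not prove this statement; it is stated as a \emph{Fact} and attributed to Theorem~5.2.1 of \cite{MeynBook}, where the proof rests on a nontrivial measure-theoretic lemma going back to Jain and Jamison (also in Orey's and Nummelin's monographs). Your proposal attempts to reconstruct that argument from scratch. The surrounding scaffolding is sensible and matches the textbook pattern: sampled kernel $K_a$, Radon--Nikodym density against $\psi$ on a finite-$\psi$-measure set, a truncation-and-pigeonhole step to pass from $K_a$ to a single power $P^{n_0}$, and a final Chapman--Kolmogorov transfer to relocate the small set inside $A$. The last two steps are essentially correct.

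The gap is the ``Egorov-type level-set argument'' that purports to produce a rectangle $C\times D$ with $\psi(C)>0$, $\psi(D)>0$, and $k(x,y)\ge\delta$ on $C\times D$. No such argument exists in this generality: a measurable subset of $\state\times\state$ of positive $\psi\times\psi$-measure need not contain any product set with both factors of positive $\psi$-measure. For instance, with $\psi$ Lebesgue on $[0,1]$, choose a dense open $U\subset\mathbb{R}$ of small measure and set $E=\{(x,y)\in[0,1]^2 : x-y\notin U\}$. Then $E$ has positive planar measure, yet if $C\times D\subseteq E$ with $\psi(C),\psi(D)>0$, the Steinhaus theorem would force $C-D$ to contain an interval inside the nowhere dense set $U^c$, a contradiction. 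This obstruction is precisely what makes existence of small sets a genuine theorem rather than a routine Radon--Nikodym exercise. The correct argument does not try to lower-bound $k$ itself on a rectangle; it passes to an iterated density (a two- or three-fold Chapman--Kolmogorov composition of $k$ with itself), for which a rectangle lower bound \emph{can} be extracted via Fubini together with a Cauchy--Schwarz or averaging argument. Without that iteration the proposal does not close, and Egorov's theorem (which concerns almost-uniform convergence of sequences of functions, not rectangle structure of level sets) is not the right tool to fill the hole.
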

\begin{definition} A set $C \in \clB^+(\state)$ is called $\kappa$-petite if
  there is a positive measure $\kappa$ on $\clB(\state)$ and a probability distribution
  $a$ on $\mathbb{Z}_+=\{0,1,2\ldots\}$  such that
\begin{equation}\label{eq: petite set def}
\sum\limits_{n=0}^{\infty}a(n)P^{n}(x,B)\geq  \kappa(B) \quad \text{for all }  B\in \clB(\state),\; x\in C.
\end{equation}
\end{definition}

The convolution of two functions $f,g: \mathbb{Z}_+ \rightarrow \mathbb{R}$,
denoted by $f \ast g$, is defined as usual by $f \ast g (n)= \sum\limits_{k=0}^{n}f(k)g(n-k)$, for all $n \in \mathbb{Z}_+$. The next lemma follows from Lemma 5.5.2 of \cite{MeynBook} and allows us to assume without loss of generality that for an irreducible Markov chain, if a set is $\kappa$-petite, then $\kappa$ can be replaced by maximal irreducibility measure (or equivalently $\kappa$ can be assumed maximal).
\begin{lemma}\label{lemma: petite irr. meas.}
  If an irreducible Markov chain has some set $C \in \clB^+(\state)$ that is
  $\kappa$-petite for some distribution $a$, then $C$ is
  $\psi$-petite for the distribution $a \ast f (n)$ where $f(n)=2^{-n-1}$ and
  $\psi$ is a maximal irreducibility measure.
\end{lemma}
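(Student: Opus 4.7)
The plan is to reduce the lemma to a single convolution identity together with a standard fact about how maximal irreducibility measures are built. Introduce the sampled kernel $K_g(x,B) := \sum_{n=0}^{\infty} g(n) P^n(x,B)$ for any subprobability distribution $g$ on $\mathbb{Z}_+$. The key algebraic step is the factorization
\[
K_{a \ast f}(x,B) \;=\; \int K_a(x,dy)\, K_f(y,B),
\]
obtained by writing $(a \ast f)(n) = \sum_{k=0}^n a(k) f(n-k)$, swapping the order of the nonnegative double sum (Fubini), substituting $m = n-k$, and using Chapman--Kolmogorov $P^{k+m}(x,B) = \int P^k(x,dy) P^m(y,B)$. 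This is the only nontrivial calculation.

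Applying the factorization to $x \in C$ and then using the $\kappa$-petite hypothesis $K_a(x,\,\cdot\,) \geq \kappa(\,\cdot\,)$ immediately gives
\[
K_{a \ast f}(x,B) \;\geq\; \int \kappa(dy)\, K_f(y,B) \;=:\; \psi(B),
\]
so $C$ is $\psi$-petite with sampling distribution $a \ast f$ (which is a probability distribution, since $\sum_n 2^{-n-1} = 1$). The remaining task is to argue that $\psi$ is a maximal irreducibility measure, not merely a positive measure.

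For this, I would first verify that $\kappa$ is itself an irreducibility measure. If $\kappa(B) > 0$, then for every $x \in C$ there is some $n$ with $P^n(x,B) > 0$; since $C \in \clB^+(\state)$, the assumed irreducibility of the chain guarantees that every $x \in \state$ reaches $C$ in finitely many steps, and composing gives $\Prob_x(\tau_B < \infty) > 0$ for all $x$. Given that $\kappa$ is an irreducibility measure, the measure $\psi = \kappa K_f$ formed by weighting $K_f$ with the geometric distribution $f(n) = 2^{-n-1}$ is exactly the standard construction of a maximal irreducibility measure (see Proposition~4.2.2 of \cite{MeynBook}): $K_f(y,B) > 0$ if and only if $P^n(y,B) > 0$ for some $n$, and this ``resolvent'' averaging converts any irreducibility measure into a maximal one.

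I do not foresee a serious obstacle; the only bookkeeping issues are the implicit non-triviality of $\kappa$ in the definition of a petite set and the careful distinction between an irreducibility measure and a maximal one. The convolution identity is what makes the weighting $f(n) = 2^{-n-1}$ pass through the petite inequality while simultaneously performing the standard ``resolvent'' upgrade of $\kappa$ to $\psi$.
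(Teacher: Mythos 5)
Your proof is correct and takes essentially the same route as the paper, which simply delegates to Lemma~5.5.2 (and its proof) in Meyn--Tweedie \cite{MeynBook}: factor $K_{a\ast f} = K_a K_f$ via Chapman--Kolmogorov, push the petite bound through, and invoke the standard fact (Proposition~4.2.2 of \cite{MeynBook}) that averaging an irreducibility measure against the geometric resolvent kernel $K_f$ yields a maximal one. The only loose spot is the sentence showing $\kappa$ is an irreducibility measure: you should say ``every $x$ reaches $C$ with positive probability'' rather than ``in finitely many steps,'' and the composition should be done through the sampled kernel $\sum_k a(k)P^{m+k}(x,B)\ge P^m(x,C)\kappa(B)$ rather than by picking a single $n$ per $y\in C$, since that $n$ may vary with $y$.
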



An important result is the equivalence of small sets and petite sets.
\begin{theorem}[\cite{MeynBook}, Theorem 5.5.3] \label{lemma: petite is small}
For an aperiodic and irreducible Markov chain every petite set is small.
\end{theorem}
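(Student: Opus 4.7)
The plan is to extract a small set from the support of the minorizing measure, use aperiodicity to promote its small-set property to all sufficiently large times, and then collapse the random-time mixture defining petiteness into a single deterministic time via a Chapman--Kolmogorov composition. First, by \Lemma{lemma: petite irr. meas.} I may assume without loss of generality that the minorizing measure $\kappa$ equals a maximal irreducibility measure $\psi$. By Fact~\ref{Small sets exist}, I pick a small set $C_0 \in \clB^+(\state)$; shrinking $C_0$ if necessary (a standard reduction using irreducibility), I may also arrange that $C_0$ is $(m_0,\delta_0,\nu)$-small for a probability measure $\nu$ with $\nu(C_0)>0$.

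The key intermediate step I will invoke (essentially Proposition~5.4.5 of \cite{MeynBook}) is the following aperiodicity fact: under aperiodicity and irreducibility, there is a finite threshold $m^*$ such that for every $m \geq m^*$ the set $C_0$ is $(m,\delta_m,\nu)$-small for some $\delta_m>0$, with the same minorizing measure $\nu$. I treat this as a prerequisite; the underlying reason is that the set $\{m : C_0 \text{ admits an } (m,\cdot,\nu)\text{-minorization}\}$ is closed under addition and has greatest common divisor equal to the period, which equals one under aperiodicity, and therefore contains all sufficiently large integers by a standard number-theoretic lemma.

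With this in hand, fix any $N \geq m^*$ and set $\delta^* \eqdef \min_{m^* \leq m \leq N}\delta_m > 0$. For every $x \in C$, every $B \in \clB(\state)$, and every $j \in \{0,1,\dots,N-m^*\}$, Chapman--Kolmogorov gives
\begin{equation*}
P^N(x,B) \;\geq\; \int_{C_0} P^j(x,dy)\,P^{N-j}(y,B) \;\geq\; \delta^*\,\nu(B)\,P^j(x,C_0).
\end{equation*}
Multiplying each such inequality by $a(j)$, summing over $j \in \{0,\dots,N-m^*\}$, and using $\sum_{j=0}^{N-m^*} a(j) \leq 1$, I obtain
\begin{equation*}
P^N(x,B) \;\geq\; \delta^*\,\nu(B)\sum_{j=0}^{N-m^*} a(j)\,P^j(x,C_0).
\end{equation*}
Since $a$ is a probability distribution, I will choose $N$ so large that the tail satisfies $\sum_{j>N-m^*} a(j) < \psi(C_0)/2$; combined with the petite hypothesis $\sum_{j=0}^{\infty} a(j)\,P^j(x,C_0) \geq \psi(C_0)$ this gives $\sum_{j=0}^{N-m^*} a(j)\,P^j(x,C_0) \geq \psi(C_0)/2$, and hence $P^N(x,B) \geq (\delta^*\psi(C_0)/2)\,\nu(B)$ uniformly in $x \in C$ and $B \in \clB(\state)$. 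This is precisely the statement that $C$ is $(N,\,\delta^*\psi(C_0)/2,\,\nu)$-small.

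The main obstacle I anticipate is the aperiodicity fact quoted in the second step: that is where aperiodicity genuinely bites, since without it a period $d>1$ would force a deterministic pattern of forbidden times which no single $N$ could avoid. Once that lemma is granted, the rest of the argument is essentially Chapman--Kolmogorov bookkeeping together with a routine tail estimate on the sampling distribution $a$.
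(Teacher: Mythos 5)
Your argument is correct and is essentially the standard proof of this result as given in Meyn and Tweedie (Theorem 5.5.3 and its supporting Propositions 5.4.5, 5.5.4), which the paper simply cites without reproducing. The three ingredients you isolate are exactly the ones that drive the textbook proof: passing to a maximal irreducibility measure via \Lemma{lemma: petite irr. meas.}, locating a $\nu_M$-small set $C_0$ with $\nu(C_0)>0$ whose admissible minorization times form an additive semigroup that (by aperiodicity and the Schur/Chicken McNugget lemma) eventually covers all integers, and a Chapman--Kolmogorov concatenation that truncates the sampling distribution $a$ once the tail is smaller than $\psi(C_0)/2$. The order of quantifiers is handled correctly: $N$ is chosen to satisfy both $N\geq m^{*}$ and the tail bound, and only then is $\delta^{*}=\min_{m^{*}\le m\le N}\delta_m>0$ formed, so there is no circularity. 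One small bookkeeping point worth stating explicitly (you invoke it implicitly): the bound $\sum_{j>N-m^{*}}a(j)P^{j}(x,C_0)\le\sum_{j>N-m^{*}}a(j)$ uses $P^{j}(x,C_0)\le 1$, which is what lets the tail estimate on $a$ alone control the tail of the minorization sum uniformly in $x$. With that noted, the argument closes cleanly.
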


Small sets are analogous to compact sets in the stability theory for
$\varphi$-irreducible Markov chains.  In most applications of
$\varphi$-irreducible Markov chains we find that any compact set is small -- in
this case, $\{x_t\}$ is called a \emph{T-chain} \cite{MeynBook}. The
equivalence of small sets and petite sets can be used cleverly to show that all
petite sets are petite for some distribution that has finite mean. The next
theorem follows from Propositions 5.5.5 and 5.5.6 of \cite{MeynBook}.

\begin{theorem}\label{theorem: petite set finite mean}
For an aperiodic and irreducible Markov chain every petite set is petite with a maximal irreducibility measure for a distribution with finite mean.
\end{theorem}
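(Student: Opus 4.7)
The plan is to chain together \Theorem{lemma: petite is small} and \Lemma{lemma: petite irr. meas.} in a way that simultaneously delivers the two required features: that the dominating measure can be taken to be a maximal irreducibility measure $\psi$, and that the sampling distribution has finite mean. The input is a petite set $C$, and the output should be a probability distribution $a'$ on $\ZZ_+$ with $\sum_n n a'(n) < \infty$ such that $\sum_n a'(n) P^n(x,B) \ge \psi(B)$ for every $x\in C$, $B\in\clB(\state)$.

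First I would invoke \Theorem{lemma: petite is small}: since the chain is aperiodic and irreducible, the petite set $C$ is in fact small. Hence there exist an integer $n_0\ge 1$, a constant $\epsilon\in(0,1)$, and a nontrivial positive measure $\nu$ such that $P^{n_0}(x,B)\ge \epsilon\nu(B)$ for all $x\in C$ and $B\in\clB(\state)$. Taking the point-mass distribution $a = \delta_{n_0}$ and the measure $\kappa = \epsilon\nu$, this is exactly the petite-set inequality \eqref{eq: petite set def} with the crucial feature that the sampling distribution $a$ has finite mean $n_0$.

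Next I would feed this into \Lemma{lemma: petite irr. meas.} to upgrade the dominating measure to a maximal irreducibility measure $\psi$, at the cost of convolving the sampling distribution with $f(n)=2^{-n-1}$. Thus $C$ is $\psi$-petite for the distribution $a' = \delta_{n_0} \ast f$. The key observation is that the convolution preserves finite mean: writing $\mu(g) = \sum_n n g(n)$, we have $\mu(a') = \mu(\delta_{n_0}) + \mu(f) = n_0 + \sum_{n\ge 0} n\, 2^{-n-1} = n_0 + 1 < \infty$. Combining the two steps, $C$ is $\psi$-petite with respect to a distribution of finite mean, which is exactly the claim.

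The ``main obstacle'' is mostly bookkeeping rather than a genuine difficulty: one must verify that $f$ is indeed a probability distribution (it is, since $\sum_{n\ge 0} 2^{-n-1}=1$) and that taking a point-mass distribution is permissible in the definition of petite, so that the small-set inequality can be read as a petite-set inequality with finite-mean sampling distribution before invoking \Lemma{lemma: petite irr. meas.}. Once these trivialities are in place the argument is essentially a two-line composition of the two previous results.
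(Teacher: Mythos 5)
Your proposal is correct and reconstructs exactly the argument the paper defers to (Propositions 5.5.5--5.5.6 of Meyn and Tweedie): use aperiodicity to upgrade petite to $(n_0,\epsilon,\nu)$-small via \Theorem{lemma: petite is small}, read this as $\kappa$-petite with the point-mass distribution $\delta_{n_0}$, then convolve with $f(n)=2^{-n-1}$ via \Lemma{lemma: petite irr. meas.} to obtain a maximal irreducibility measure while the mean only increases to $n_0+1$. The mean computation $\sum_{n\ge 0} n\,2^{-n-1}=1$ and the additivity of means under convolution of probability distributions are both correct, so the two-step composition is sound.
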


Invoking (\ref{eq: petite set def}), we will use Theorem \ref{theorem: petite set finite mean} repeatedly with a set $C$ that is $\kappa$-petite for some distribution $a(\,\cdot\,)$ to achieve bounds on hitting times for any $B \in \clB^+(\state)$,

\begin{eqnarray} \label{supportingPetite}
E_x\bigg[\sum\limits_{k=0}^{\tau_B-1}1_C(x_k)\bigg] &\leq&
\frac{1}{\kappa(B)}E_x\bigg[\sum\limits_{k=0}^{\tau_B-1}\sum_{n=1}^{\infty} 1_B(x_{k+n})a(n)\bigg] \nonumber \\
&\leq&
\frac{1}{\kappa(B)}\sum_{n=0}^{\infty} n a(n) =: c(B)<\infty.
\end{eqnarray}

\subsection{Regularity and Ergodicity}\label{defSec: reg and ergo}
Regularity and ergodicity are concepts closely related through the work of Meyn and Tweedie \cite{MeynBook}, \cite{MeynStateDrift} and Tuominen and Tweedie \cite{Tuominen-and-Tweedie}. The definitions below are in terms of functions $f:\state\rightarrow [1,\infty)$ and $r:\mathbb{Z}_+\rightarrow (0,\infty)$.
\begin{definition} A set $A \in \clB(\state)$ is called $(f,r)$-regular if
\[
\sup_{x\in A}E_x\bigg[\sum_{k=0}^{\tau_B-1}r(k)f(x_k)\bigg]<\infty
\]
for all $B \in \clB^+(\state)$. A finite measure $\nu$ on $\clB(\state)$ is called $(f,r)$-regular if
\[
E_{\nu}\bigg[\sum_{k=0}^{\tau_B-1}r(k)f(x_k)\bigg]<\infty
\]
for all $B \in \clB^+(\state)$, and a point $x$ is called $(f,r)$-regular if the measure $\delta_x$ is $(f,r)$-regular.
\end{definition}


To make sense of ergodicity we first need to define the $f$-norm, denoted $\lVert . \rVert_f$.

\begin{definition} For a function $f: \state \rightarrow [1,\infty)$ the $f$-norm of a measure $\mu$ defined on $(\state,\clB(\state))$ is given by
\[
\lVert \mu \rVert_f =\sup_{g \leq f}  \ \bigg| \int \mu(dx)g(x) \bigg|
\]
where the supremum  is taken over all measurable $g$ such that $g(x)\le f(x)$ for
all $x$.
\end{definition}
The commonly used total variation norm, or $TV$-norm, is the $f$-norm when
$f=1$, and is denoted by $\lVert \,\cdot\, \rVert_{TV}$.

\begin{definition} A Markov chain $\{x_t\}$ with invariant distribution $\pi$ is
  $(f,r)$-ergodic if
\begin{equation}\label{eq: (f,r) ergodic}
r(n)\lVert P^n(x,\,\cdot\, )-\pi(\,\cdot\, )\rVert_f \rightarrow 0  \quad \text{as $n \rightarrow \infty$ for all $x\in \state.$}
\end{equation}
If (\ref{eq: (f,r) ergodic}) is satisfied for a geometric $r$ (so that $r(n) = M \zeta^n$ for some $\zeta > 1$, $M < \infty$) and $f=1$ then the
Markov chain $\{x_t\}$ is called  geometrically ergodic.
\end{definition}

\subsection{The Splitting Technique and the Coupling Inequality}\label{Splitting}
Nummelin's splitting technique \cite{Nummelin} (see also \cite{AthreyaNey}) is a widely used method in the
study of Markov chains; see, e.g., Chapter 5 in \cite{MeynBook}, Proposition 3.7 and Theorem 4.1 in  \cite{Tuominen-and-Tweedie}, Section~4.2 in \cite{Rosenthal}. With an irreducible, aperiodic Markov chain $\{x_t\}$ on
state space $\state $ with transition probability $P$ and a $(m,\delta,\nu
)$-small set $C$ with finite return time, we construct an atom in order to
construct an invariant distribution for the chain.

We first review the splitting technique for the case $m=1$ (i.e. $C$ is a $(1,
\delta, \nu$-small set). Construct a new Markov chain $\{z_t\}$ on $\state
\times \{0,1\}$ by letting $z_t=(x_t,a_t)$, where $\{a_t\}$ is a sequence of
random variables on $\{0,1\}$, independent of $\{x_t\}$, except  when $x_t \in C$.

1. If $x_t \notin C$ then $x_{t+1} \sim P(x_t, \,\cdot\,)$

2. If $ x_t \in C$, then

\qquad with probability $\delta : a_t = 1$ and $ x_{t+1} \sim \nu(\,\cdot\,)$

\qquad with probability $(1-\delta) : a_t=0$ and $\displaystyle x_{t+1} \sim \frac{P(x_t,\,\cdot\,)-\delta \nu(\,\cdot\,)}{1-\delta}$.

Thus the distribution of $x_{t+1}$ given $z_t$ is
\begin{align*}
P\big(x_{t+1} \in B \,\big|\, z_t=(x_t,a_t) \in C \times \{1\}\big)& =\nu(B)\\
P\big(x_{t+1} \in B \,\big|\, z_t=(x_t,a_t) \in C \times \{0\}\big)& =\frac{P(x_t,B)-\delta
  \nu(B)}{1-\delta}
\end{align*}
Note that $\frac{P(x_t,\,\cdot\,)-\delta \nu(\,\cdot\,)}{1-\delta} \geq 0 $ is a
valid probability measure since $C$ is $(1,\delta, \nu)$-small. If $x_t \in C$,
then
\[x_{t+1} \sim \delta \nu(\,\cdot\,) + (1-\delta) \frac{P(x_t,\,\cdot\,)-\delta \nu(\,\cdot\,)}{1-\delta} = P(x_t, \,\cdot\,)\]
so the one-step transition probabilities are unchanged for $\{x_t\}$.

This construction allows one to define $S=C\times\{1\}$ as an atom
for $\{z_t\}$, and to construct an invariant distribution for $\{x_t\}$ using
$\{z_t\}$.

We specified the technique for the one step transition probability, but the same
construction applies for $(m, \epsilon, \nu)$-small sets where $m>1$
with the only change  that the $m-1$ steps after hitting $C$ at $x_t$ are
distributed conditionally on $x_t$ and $x_{t+m}$ (see Section 4.2 of
\cite{Rosenthal}). When  $m >1$,  the Markov chain $\{z_t\} $ does not
have an atom; instead it has an "$m$-step atom" in the sense that
$P^m\big((x,1),\,\cdot\, \big)=P^m\big((y,1),\,\cdot\, \big)$ for all $x,y \in C$.

A useful method to obtain bounds of convergence is through the {\it coupling inequality}. The coupling inequality bounds the total variation
distance between the distributions of two random variables by the probability they are different. Let $X,\ Y$ be two jointly distributed random variables. The following is the well known coupling inequality.
\begin{align*}
 \lVert P(X \in \,\cdot\,)-P(Y \in \,\cdot\,) {\rVert}_{TV} \leq P(X \neq Y).
\end{align*}
This inequality is useful in discussions of ergodicity when used in
conjunction with parallel Markov chains, as in Theorem 4.1 of \cite{Rosenthal},
and Theorem 4.2 of \cite{Hairer}: One tries to create two Markov chains, $x_t$
and $x'_t$, having the same one-step transition probability distribution but
driven independently until they are coupled on a small set with some fixed
probability whenever they visit the small set. Here, $x'_t$ is a stationary Markov chain. By the Coupling Inequality and
the previous discussion with Nummelin's splitting technique we have $\lVert P^n
(x,\,\cdot\, )-\pi(\,\cdot\, ) \rVert_{TV} \leq {P(x_n \neq x'_n)}$, where $x'_n
\sim \pi P^n =\pi$.

\subsection{Drift criteria for positivity}
\label{driftCriteria}

We now consider specific   formulations of the random-time drift criterion
\eqref{e:RandomDrift}.   Throughout the paper the sequence of stopping times
$\{\tau_i\}_{i\ge 0}$ is assumed to be non-decreasing, with $\tau_0=0$.

\Theorem{thm5} is the general result of \cite{YukMeynTAC2010}, providing a
single criterion for positive Harris recurrence, as well as finite ``moments''
(the steady-state mean of the function $f$ appearing in the drift condition
\eqref{e:thm5delta}).  The drift condition \eqref{e:thm5delta} is a refinement
of
\eqref{e:RandomDrift}. 

\begin{theorem}\cite{YukMeynTAC2010} \label{thm5}
Suppose that $\{x_t\}$ is a $\varphi$-irreducible and aperiodic Markov chain.   Suppose moreover that there are functions
 $V \colon \state \to (0,\infty)$,
 $\delta \colon \state \to [1,\infty)$,
 $f\colon \state \to [1,\infty)$,
a small set $C$ on which $V$ is bounded, and a constant $b \in \Re$,   such that
\begin{equation}
\begin{aligned}
\Expect[V(x_{\tau_{i+1}}) \mid \clF_{\tau_i }]  &\leq  V(x_{\tau_{i}}) -\delta(x_{\tau_i }) + b1_C(x_{\tau_i})
\\
 \Expect \biggl[\,\sum_{k=\tau_i}^{\tau_{i+1}-1} f(x_k) \, \Big|\, \clF_{\tau_i }\biggr]  &\le \delta(x_{\tau_i}). \qquad \qquad \qquad \qquad i\ge 0.
\end{aligned}
\label{e:thm5delta}
\end{equation}
Then the following hold:
\begin{romannum}
\item
$\{x_t\}$ is positive Harris recurrent, with unique invariant distribution $\pi$
\item $\pi(f)\eqdef \int f(x)\, \pi(dx) <\infty$.
\item  For any function $g$ that is bounded by $f$, in the sense that $\sup_{x}
  |g(x)|/f(x)<\infty$, we have convergence of moments in the mean, and the strong law
  of large numbers holds:
\[
\begin{aligned}
\lim_{t\to\infty} \Expect_{x}[g(x_t)] &= \pi(g)
\\
\lim_{N\to\infty} \frac{1}{N} \sum_{t=0}^{N-1} g(x_t) &= \pi(g)\qquad a.s.\,, \ x\in\state
\end{aligned}
\]
\end{romannum}
\end{theorem}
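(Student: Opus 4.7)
The plan is to analyze the sampled process $\{x_{\tau_i}\}_{i\ge 0}$ first and then transfer its recurrence and moment bounds to $\{x_t\}$ via the second inequality in \eqref{e:thm5delta}, which serves as the bridge between ``sampled time'' and ``real time.'' The first inequality in \eqref{e:thm5delta} is precisely a state-dependent Foster--Lyapunov drift condition for the sampled process with Lyapunov function $V$, drift function $\delta\ge 1$, and small set $C$ on which $V$ is bounded.

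First, let $\sigma_C = \inf\{i\ge 1 : x_{\tau_i}\in C\}$. The process $M_n := V(x_{\tau_n}) + \sum_{i=0}^{n-1}\delta(x_{\tau_i}) - b\sum_{i=0}^{n-1}1_C(x_{\tau_i})$ is a supermartingale with respect to $\{\clF_{\tau_n}\}$ by the first drift inequality. Stopping at $n\wedge\sigma_C$ (so the $1_C$ terms vanish except possibly at $i=0$) and passing to the limit via Fatou yields
\[
\Expect_x\!\biggl[\sum_{i=0}^{\sigma_C - 1} \delta(x_{\tau_i})\biggr] \le V(x) + b.
\]
Since $\delta \ge 1$, this in particular gives $\Expect_x[\sigma_C] \le V(x)+b$. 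Applying the second inequality in \eqref{e:thm5delta} with $f\ge 1$ gives $\Expect[\tau_{i+1}-\tau_i \mid \clF_{\tau_i}] \le \delta(x_{\tau_i})$, and a further tower-property iteration yields
\[
\Expect_x[\tau_{\sigma_C}] = \Expect_x\!\biggl[\sum_{i=0}^{\sigma_C-1}(\tau_{i+1}-\tau_i)\biggr] \le V(x)+b.
\]
Since the original-chain return time $\tau_C$ of \eqref{e:tauA} satisfies $\tau_C \le \tau_{\sigma_C}$ a.s., we obtain $\Expect_x[\tau_C]<\infty$ for every $x$. Combined with $\varphi$-irreducibility, aperiodicity, and the smallness of $C$, classical positive Harris recurrence criteria (Theorems~10.0.1 and~13.0.1 of \cite{MeynBook}) yield (i).

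For (ii), invoke the cycle representation of $\pi$ obtained via Nummelin splitting of $C$: $\pi(A) \propto \Expect_{\hat{\nu}}\!\bigl[\sum_{t=0}^{\tau_C-1} 1_A(x_t)\bigr]$ for an appropriate probability measure $\hat{\nu}$ on (the splitting of) $C$. Decomposing the inner sum into blocks between consecutive sampling times and invoking the second inequality in \eqref{e:thm5delta} gives
\[
\Expect_{\hat{\nu}}\!\biggl[\sum_{t=0}^{\tau_C-1} f(x_t)\biggr] \le \Expect_{\hat{\nu}}\!\biggl[\sum_{i=0}^{\sigma_C-1}\delta(x_{\tau_i})\biggr] \le \sup_{x\in C} V(x) + b < \infty,
\]
where the last step uses boundedness of $V$ on $C$ together with the bound derived in the previous paragraph applied from states in $C$. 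Hence $\pi(f)<\infty$. Part (iii) is then the standard $f$-norm ergodic theorem and strong law of large numbers for positive Harris chains with $\pi(f)<\infty$ (Theorems~14.0.1 and~17.0.1 of \cite{MeynBook}).

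The main obstacle is that when the $\tau_i$ are genuinely state-dependent, $\{x_{\tau_i}\}$ is not a Markov chain in the usual sense, so every drift iteration must be carried out through the filtration $\{\clF_{\tau_i}\}$ via optional sampling and the tower property rather than through a one-step transition kernel. The second inequality in \eqref{e:thm5delta} is the indispensable technical bridge: without an explicit per-cycle bound on $\sum_{k=\tau_i}^{\tau_{i+1}-1} f(x_k)$, the return-time and moment bounds established at the sampled level would not lift to $\{x_t\}$.
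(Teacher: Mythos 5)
Your proposal is correct and follows essentially the same route the paper itself uses in proving Theorem~\ref{Prop: random time app}: a supermartingale built from the sampled-chain drift, optional stopping at the sampled return time $\sigma_C$ (the paper's $\gamma_C$), monotone convergence, and then the inequality $\tau_C \le \tau_{\sigma_C}$ together with the second drift bound to transfer the $f$-weighted return-time estimate back to the original chain. One minor simplification: once you have $\sup_{x\in C}E_x\bigl[\sum_{k=0}^{\tau_C-1}f(x_k)\bigr]\le \sup_{C}V+b<\infty$ for the small set $C$, parts (ii) and (iii) follow directly from the $f$-norm ergodic theorem (Theorem~14.0.1 of \cite{MeynBook}) without detouring through an explicit Nummelin-splitting cycle formula, which is slightly imprecise as written since the formula in terms of a measure $\hat{\nu}$ on $C$ requires passing to the split chain's atom and re-verifying the drift there.
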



\subsection{Rates of Convergence: Geometric Ergodicity}\label{Sec: Geometric Ergodicity}
In this section, following \cite{MeynBook} and \cite{Rosenthal}, we review
results stating that
a strong type of ergodicity, geometric ergodicity, follows from a simple drift
condition. An irreducible Markov chain is said to satisfy the {\it univariate
  drift condition} if there are constants $\lambda \in (0,1)$ and $b < \infty$,
along with a function $V:\state \rightarrow [1,\infty)$, and a small set $C$
such that
\begin{eqnarray}\label{univariate drift}
PV \leq \lambda V + b 1_C .
\end{eqnarray}

Using the coupling inequality, Roberts and Rosenthal \cite{Rosenthal} prove  that
geometric ergodicity follows from the univariate drift condition. We also note that the univariate drift condition allows us to
assume that $V$ is bounded on $C$ without any loss (see Lemma 14 of
\cite{Rosenthal}).

\begin{theorem}[\protect{\cite[Theorem 9]{Rosenthal}}]
  Suppose $\{x_t\}$ is an aperiodic, irreducible Markov chain with invariant
  distribution $\pi$. Suppose $C$ is a $(1,\epsilon,\nu)$-small set
  and $V:\state  \rightarrow [1,\infty)$ satisfies the univariate drift condition with
  constants $\lambda \in (0,1)$ and $b < \infty$. Then $\{x_t\}$ is geometrically ergodic.
\end{theorem}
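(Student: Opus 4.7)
My plan is to follow the coupling route outlined in Section~\ref{Splitting}, applying Nummelin's splitting to a \emph{pair} of independent copies of the chain and controlling the coupling time via the univariate drift.

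First, I would construct two coupled processes $\{x_t\}$ and $\{x_t'\}$ on a common probability space, both with transition kernel $P$, with $x_0=x$ and $x_0'\sim\pi$. Run them independently as long as $(x_t,x_t')\notin C\times C$. Whenever $(x_t,x_t')\in C\times C$, use the $(1,\epsilon,\nu)$-smallness to split each marginal as described: with probability $\epsilon$ both chains simultaneously draw the next state from $\nu$, making them equal and allowing us to couple them forever after; with the complementary probability $1-\epsilon$ they draw independently from the residual kernel $(P(x_t,\cdot)-\epsilon\nu)/(1-\epsilon)$ and $(P(x_t',\cdot)-\epsilon\nu)/(1-\epsilon)$. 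Let $T=\inf\{t:x_t=x_t'\text{ and coupled}\}$. By the coupling inequality,
\begin{equation*}
\|P^n(x,\cdot)-\pi(\cdot)\|_{TV}\le \Prob_{x,\pi}(T>n),
\end{equation*}
so it suffices to show the right side decays geometrically in $n$.

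Next, I would promote the univariate drift to a bivariate drift on the product chain. Define $\widetilde V(x,y)=\tfrac12(V(x)+V(y))$. Since $PV\le\lambda V+b\mathbf 1_C$ and the two copies are independent off $C\times C$, one gets $\widetilde P\widetilde V(x,y)\le \lambda\widetilde V(x,y)+b(\mathbf 1_C(x)+\mathbf 1_C(y))/2$, and with a little care (using that $V\ge 1$ and a modified Lyapunov function such as $\widetilde V_\alpha=\tfrac12(V^\alpha+V^{\alpha})$ or simply truncating to exclude $C\times C$) one obtains a genuine geometric drift toward $C\times C$: $\widetilde P\widetilde V\le \lambda'\widetilde V+b'\mathbf 1_{C\times C}$ for some $\lambda'\in(0,1)$, $b'<\infty$. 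Lemma 14 of \cite{Rosenthal} (cited in the excerpt) lets us assume $V$ bounded on $C$, so $\widetilde V$ is bounded on $C\times C$. Standard Foster--Lyapunov arguments then give that the return time $\sigma$ of the bivariate chain to $C\times C$ has a geometrically bounded tail: $\Expect_{x,y}[\kappa^{\sigma}]\le K\widetilde V(x,y)$ for some $\kappa>1$.

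Finally, I would bound $\Prob(T>n)$ using the geometric return-time tail together with the Bernoulli coupling attempts. Let $N_n$ be the number of visits of the bivariate chain to $C\times C$ by time $n$; each such visit is an independent attempt succeeding with probability $\epsilon$, so conditional on $N_n=k$ the coupling probability is $1-(1-\epsilon)^k$. Splitting on a threshold $k_n=\lfloor\eta n\rfloor$ for small $\eta>0$, one gets
\begin{equation*}
\Prob_{x,\pi}(T>n)\le(1-\epsilon)^{k_n}+\Prob_{x,\pi}(N_n<k_n),
\end{equation*}
and the geometric tail on $\sigma$ makes $\Prob_{x,\pi}(N_n<\eta n)$ decay geometrically via a Markov-type bound $\Expect_{x,\pi}[\kappa^{\sigma_1+\cdots+\sigma_{k_n}}]\le (K)^{k_n}(\pi(\widetilde V)+\widetilde V(x,x'))$, after integrating $x'$ against $\pi$ (finite since the univariate drift implies $\pi(V)<\infty$). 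Optimizing $\eta$ yields $\Prob_{x,\pi}(T>n)\le M(x)\rho^n$ for some $\rho<1$, which via the coupling inequality gives $\|P^n(x,\cdot)-\pi\|_{TV}\le M(x)\rho^n$, i.e.\ geometric ergodicity. The main obstacle is the middle step: converting the univariate drift into a genuinely contractive bivariate drift with small set $C\times C$, because naively $\widetilde V$ only decreases off a set larger than $C\times C$; the standard remedy (used by Rosenthal) is to work with a slightly modified Lyapunov function or to argue directly via the small-set return times for each marginal and then combine them, and that bookkeeping is where the technical work lies.
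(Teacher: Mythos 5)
The paper does not actually prove this theorem; it is presented purely by citation to \cite[Theorem 9]{Rosenthal}, and the surrounding text says explicitly that Roberts and Rosenthal establish it ``using the coupling inequality.'' Your sketch is a reconstruction of that very argument---couple two copies of the chain via the $(1,\epsilon,\nu)$-minorization, bound $\|P^n(x,\cdot)-\pi\|_{TV}$ by $\Prob(T>n)$, promote the univariate drift to a bivariate drift for the product chain, and control the coupling time through the geometric return-time tail and the Bernoulli coupling attempts on $C\times C$---so it matches the cited route rather than offering an alternative (the alternative the paper points to, Theorem 15.0.1 of \cite{MeynBook}, is the regeneration/first-entrance--last-exit argument, which you are not using).

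Two places where your bookkeeping is not yet a proof. First, the obstacle you flag at the bivariate-drift step is real, but the remedy is not a power modification $V^\alpha$ of the Lyapunov function. In Roberts--Rosenthal the device is to observe that the univariate drift $PV\le\lambda V+b\mathbf 1_C$ yields $\bar Ph(x,y)\le h(x,y)/\alpha$ for $(x,y)\notin C\times C$ with $h(x,y)=\tfrac12\bigl(V(x)+V(y)\bigr)$, \emph{provided} $C$ is a sublevel set $\{V\le d\}$ with $\lambda+b/(d+1)<1$: if, say, $y\notin C$ then $V(x)+V(y)>d+1$, so the offending term $b\mathbf 1_C(x)$ is absorbed as $b/(d+1)$ times $h$. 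One then enlarges the given small set to such a sublevel set (Lemma 14 of \cite{Rosenthal} and the drift condition justify this). Second, after a failed coupling attempt the two marginals do not move by $P$ but by the residual kernel $(P-\epsilon\nu)/(1-\epsilon)$, so the bivariate drift must be checked for the split chain, not just for $P\otimes P$; your Chernoff-type bound on $N_n$ implicitly assumes independence of successive return blocks, which again needs the residual-kernel drift to justify. You correctly identify the bivariate conversion as the crux, and the outline is sound, but both of these gaps would have to be closed for the argument to stand.
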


That geometric ergodicity follows from the univariate drift condition with a
small set $C$ is proven by Roberts and Rosenthal by using the coupling
inequality to bound the $TV$-norm, but an alternate proof is given by Meyn and
Tweedie \cite{MeynBook} resulting in the following theorem.

\begin{theorem}[\protect{\cite[Theorem 15.0.1]{MeynBook}}]\label{theorem: geometric ergodicity MT}
  Suppose  $\{x_t\}$ is an aperiodic and irreducible Markov chain. Then the
  following are equivalent:

(i) $E_x[\tau_B]<\infty$ for all $x \in \state$, $B \in \clB^+(\state)$, the invariant distribution $\pi$ of $\{x_t\}$ exists and there exists a petite set $C$, constants $\gamma<1$, $M>0$ such that for all $x \in C$
\[\lvert P(x,C)-\pi(C) \rvert < M\gamma^n .\]
(ii) For a petite set $C$ and for some $\kappa > 1$
\[\sup_{x\in C}E_x[\kappa^{\tau_C}]<\infty.\]
(iii) For a petite set $C$, constants $b >0$ $\lambda \in (0,1)$, and a function $V:\state \rightarrow [1,\infty]$ (finite for some $x$) such that
\[PV \leq \lambda V+b1_C.\]
Any of the conditions imply that there exists $r>1$, $R<\infty$ such that for
any $x $
\[\sum_{n=0}^{\infty}  r^n \lVert P^n(x,\,\cdot\,)-\pi(\,\cdot\,)\rVert_V \leq RV(x). \]
\end{theorem}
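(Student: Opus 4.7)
The plan is to establish the cycle of implications (iii) $\Rightarrow$ (ii) $\Rightarrow$ (i) $\Rightarrow$ (iii), and then to derive the $V$-norm geometric ergodicity bound from (iii) via the splitting technique. Throughout I will rely on Fact \ref{Small sets exist}, \Theorem{lemma: petite is small}, and \Theorem{theorem: petite set finite mean} to interchange petite and small sets freely.

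\textbf{Step 1: (iii) $\Rightarrow$ (ii).} Starting from $PV \le \lambda V + b 1_C$, I would define the process $M_n = \lambda^{-n} V(x_n) 1_{\{\tau_C > n\}}$ for $x \notin C$, and show via conditioning that $\{M_n\}$ is a supermartingale. Applying the optional sampling theorem to $\tau_C \wedge n$ and letting $n\to\infty$ by monotone convergence gives
\[
\Expect_x[\lambda^{-\tau_C}] \le V(x), \qquad x \notin C.
\]
For $x \in C$, a one-step decomposition combined with the same estimate at $x_1$ bounds $\Expect_x[\lambda^{-\tau_C}]$ by a constant times $\sup_{y\in C} V(y) + b$, which is finite since $V$ is bounded on $C$ (Lemma 14 of \cite{Rosenthal}). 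Setting $\kappa = \lambda^{-1} > 1$ yields (ii).

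\textbf{Step 2: (ii) $\Rightarrow$ (i).} Jensen's inequality gives $\sup_{x \in C} \Expect_x[\tau_C] < \infty$, which combined with irreducibility and $C$ being petite yields positive Harris recurrence and the existence of $\pi$. For geometric convergence on $C$, I would invoke the Nummelin splitting construction from Section \ref{Splitting} to build the bivariate chain $\{z_t\}$ with atom $\alpha = C \times \{1\}$. The geometric return moment at $\alpha$ is inherited from (ii), and a standard renewal/Kendall-type argument (or direct first-entrance decomposition at $\alpha$) gives constants $\gamma < 1$, $M < \infty$ with $|P^n(x,C) - \pi(C)| \le M \gamma^n$ for $x \in C$, establishing (i).

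\textbf{Step 3: (i) $\Rightarrow$ (iii).} The natural candidate is $V(x) := \Expect_x[\kappa^{\tau_C}]$ for some $\kappa > 1$ chosen slightly smaller than the geometric rate supplied by (i). Finiteness of $V$ on $C$ follows from condition (i) combined with a renewal-type bound on $\tau_C$ moments implied by the geometric decay of $P^n(x,C)-\pi(C)$, and finiteness elsewhere follows from $\Expect_x[\tau_C] < \infty$ for every $x$ together with the Markov property. The drift inequality then follows from the strong Markov property: for $x \notin C$,
\[
PV(x) = \Expect_x\bigl[\kappa^{\tau_C - 1}\bigr] \cdot \kappa = \kappa^{-1} V(x),
\]
while for $x \in C$ one bounds $PV$ by a constant $b$, giving $PV \le \kappa^{-1} V + b 1_C$.

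\textbf{Step 4: The $V$-norm geometric convergence.} This is where the main obstacle lies. Starting from (iii), I would apply the splitting technique again to the small set $C$ (invoking \Theorem{lemma: petite is small}), producing an atom $\alpha$ on the split chain. The drift condition lifts to the split space, so $\sup_{\alpha} \Expect_{\alpha}[r^{\tau_{\alpha}}] < \infty$ and, more strongly, $\Expect_x\bigl[\sum_{k=0}^{\tau_\alpha - 1} r^k V(x_k)\bigr] \le R\, V(x)$ for some $r > 1$ and $R < \infty$; this is the $(V,r)$-regularity of $\alpha$. A first-entrance/last-exit decomposition at $\alpha$ then expresses $P^n(x,\cdot) - \pi(\cdot)$ as a convolution involving the regeneration distribution at $\alpha$, and summing the resulting series in $n$ with weight $r^n$ collapses to a product of two geometrically convergent series, each controlled by the $(V,r)$-regularity estimate. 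This yields the claimed bound $\sum_{n=0}^\infty r^n \|P^n(x,\cdot) - \pi(\cdot)\|_V \le R\, V(x)$. The delicate part is verifying the $(V,r)$-regularity of $\alpha$ from the drift condition, which requires iterating the drift inequality under the geometric weight $r^n$ and absorbing the excursions through $C$ into a constant multiple of $V(x)$.
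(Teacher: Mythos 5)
The paper does not prove this statement; it is quoted verbatim (with citation) from Meyn and Tweedie's Theorem~15.0.1, whose proof occupies most of Chapters~15 and~16 of that book. So there is no ``paper's own proof'' to compare against. What I can do is assess whether your sketch is a sound outline of that argument. In broad strokes it is: the cycle (iii) $\Rightarrow$ (ii) $\Rightarrow$ (i) $\Rightarrow$ (iii) and the derivation of the $V$-norm bound from (iii) via splitting and a first-entrance/last-exit decomposition is indeed how Meyn and Tweedie organise the proof. Three points deserve care, though.

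First, in Step~1 the supermartingale should be the stopped process $M_n = \lambda^{-(n\wedge\tau_C)} V(x_{n\wedge\tau_C})$, not $\lambda^{-n}V(x_n)1_{\{\tau_C>n\}}$; your version vanishes identically at time $\tau_C$, so optional sampling returns only $0\le V(x)$, which is vacuous. With the stopped process, $E_x[M_{n\wedge\tau_C}]\le V(x)$ and monotone convergence (using $V\ge 1$) gives $E_x[\lambda^{-\tau_C}]\le V(x)$ for $x\notin C$, as you need. Second, the hypothesis allows $V:\state\to[1,\infty]$ with $V$ possibly infinite on part of $\state$, and in particular one cannot a priori assume $V$ bounded (or even finite) on the petite set $C$; Lemma~14 of \cite{Rosenthal} applies after one has already normalised to a small set on which $V$ is finite. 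The correct route is to note that $\{V<\infty\}$ is absorbing and full under (iii), take a petite subset $C'\subset C\cap\{V<\infty\}$, and run the argument on $C'$. Third, and most substantially, Step~3 ((i) $\Rightarrow$ (iii)) is the deep direction: deducing $E_x[\kappa^{\tau_C}]<\infty$ from the geometric decay of $|P^n(x,C)-\pi(C)|$ on $C$ is exactly Kendall's renewal theorem applied to the split atom, and your phrase ``a renewal-type bound on $\tau_C$ moments'' hides essentially all of that work. Similarly, the $(V,r)$-regularity of the atom in Step~4 requires the geometric analogue of the Comparison Theorem together with a nontrivial argument that the regularity is uniform over the small set; this is the content of Meyn--Tweedie's Theorems~15.2.4 and~16.1.2. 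So the skeleton is right, but Steps~3 and~4 as written are statements of intent rather than proofs, and Step~1 needs the indicator fixed.
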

We note for future reference that if (iii) above holds, (ii) holds for for all $\kappa \in (1,\lambda^{-1})$.

\subsection{Rates of Convergence: Subgeometric Ergodicity}\label{Sec: Subgeometric Ergodicity}
%

Here, we review the class of subgeometric rate functions (see Section 4 in
\cite{Hairer}, Section 5 in \cite{ConnorFort}, and \cite{MeynStateDrift},
\cite{MeynBook}, \cite{Douc}, \cite{Tuominen-and-Tweedie}). Let $\Lambda_0$ be
the family of functions $r:\mathbb{Z}_+\rightarrow [0,\infty)$ satisfying
\[ \text{$r$ is non-decreasing,} \quad r(1)\geq 2\]
and
\[\frac{\log r(n)}{n}\downarrow 0 \quad \text{as $n \rightarrow \infty$}. \]
The second condition implies that for all $r\in \Lambda_0$
\begin{equation}\label{eq:r dynamics}
r(m+n) \leq r(m)r(n) \quad\text{for all $m,n \in \mathbb{Z}_+.$}
\end{equation}

The class of subgeometric rate functions $\Lambda$ defined in
\cite{Tuominen-and-Tweedie} is the class of sequences $r$ for which there exists
a function $r_0\in \Lambda_0$ such that \[0 < \liminf_{n \to \infty}
\frac{r(n)}{r_0(n)} \leq \limsup_{n \to \infty} \frac{r(n)}{r_0(n)} < \infty. \]


The main theorems we use to construct conditions on subgeometric rates of
convergence are due to Tuominen and Tweedie \cite{Tuominen-and-Tweedie}.

\begin{theorem}[\cite{Tuominen-and-Tweedie}, Theorem 2.1]
  Suppose $\{x_t\}$ is an irreducible and aperiodic Markov chain with state
  space $\state$ and transition probability $P$. Let
  $f:\state\rightarrow[1,\infty)$ and $r \in \Lambda$ be given. The following
  are equivalent:

\noindent(i) There exists a petite set $C \in \mathcal{B}(\state)$ such that
\[
\sup_{x\in C} E_x \bigg[\sum_{k=0}^{\tau_C -1}r(k)f(x_k)\bigg]<\infty .
\]

\noindent(ii) There exist a sequence $\{V_n\}$ of functions
$V_n:\state\rightarrow[0,\infty]$, a petite set $C \in \mathcal{B}(\state)$, and
$b>0$ such that $V_0$ is bounded on $C$,
\[
V_0(x)=\infty \text{ implies } V_1(x)=\infty,
\]
and
\begin{eqnarray}
PV_{n+1}\leq V_n-r(n)f+br(n)1_{C}, \quad n\in \mathbb{Z}_+. \label{TTDriftCond}
\end{eqnarray}

\noindent(iii) There exists an $(f,r)$-regular set $A\in \mathcal{B^+(\state)}$.

\noindent(iv) There exists a full absorbing set $S$ which can be covered by a
countable number of $(f,r)$-regular sets.

\label{Main Theorem Tweedie}
\end{theorem}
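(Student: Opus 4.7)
The plan is to establish all four equivalences via the implication cycle $(\textup{ii})\Rightarrow(\textup{iii})\Rightarrow(\textup{iv})\Rightarrow(\textup{i})\Rightarrow(\textup{ii})$, exploiting throughout the submultiplicativity $r(m+n)\le r(m)r(n)$ from (\ref{eq:r dynamics}) together with Fact~\ref{Small sets exist} and Theorem~\ref{theorem: petite set finite mean} to translate between petite, small, and reachable sets.

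For $(\textup{ii})\Rightarrow(\textup{iii})$, I would form the process
\[
M_n \;=\; V_n(x_n) \;+\; \sum_{k=0}^{n-1}\bigl[r(k)f(x_k)-b\,r(k)1_C(x_k)\bigr],
\]
which the drift inequality (\ref{TTDriftCond}) makes a non-negative supermartingale. Optional stopping at $\tau_C\wedge N$ and letting $N\to\infty$ gives a finite bound on $E_x\bigl[\sum_{k=0}^{\tau_C-1}r(k)f(x_k)\bigr]$, uniformly in $x\in C$ since $V_0$ is bounded on $C$. Upgrading from regularity of $C$ for its own return time to $(f,r)$-regularity for every $B\in\clB^+(\state)$ uses the petite-set inequality (\ref{supportingPetite}): excursions up to $\tau_B$ decompose into a geometric number of excursions between successive visits to the petite set $C$, each controlled by the bound above, and the submultiplicative property keeps the shifted weighting $r(k+j)$ under control across the concatenation.

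For $(\textup{iii})\Rightarrow(\textup{iv})$, the set of states from which the given $(f,r)$-regular set $A$ is reached in finitely many steps is, by $\varphi$-irreducibility, full and absorbing; stratifying it by the minimum number of steps needed to reach $A$ supplies the countable regular cover, with $r(m+n)\le r(m)r(n)$ preserving the regular weighting after a finite deterministic preamble. The implication $(\textup{iv})\Rightarrow(\textup{i})$ is then essentially a citation: the full absorbing set $S$ lies in $\clB^+(\state)$, so Fact~\ref{Small sets exist} produces a small, hence petite, set $C'\subset S$ contained in one of the regular covering sets, and the regularity of that cover yields the required bound on $C'$.

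The main obstacle, and the step where the structure of the rate function is really used, is $(\textup{i})\Rightarrow(\textup{ii})$. The Lyapunov sequence of choice is
\[
V_n(x)\;:=\;E_x\!\left[\sum_{k=0}^{\tau_C-1} r(k+n)\,f(x_k)\right],
\]
which is bounded on $C$ at $n=0$ by hypothesis. A one-step decomposition gives $PV_{n+1}(x)=V_n(x)-r(n)f(x)$ for $x\notin C$, which is sharper than needed. For $x\in C$ the delicate contribution is $E_x[V_{n+1}(x_1)\,1_{x_1\in C}]$; here submultiplicativity yields $V_{n+1}(y)\le r(n+1)V_0(y)\le r(1)r(n)\sup_{z\in C}V_0(z)$ for $y\in C$, so the overshoot can be absorbed into the $b\,r(n)1_C(x)$ term with $b:=r(1)\sup_{z\in C}V_0(z)<\infty$. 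The auxiliary monotonicity condition $V_0(x)=\infty\Rightarrow V_1(x)=\infty$ follows from the same bound $V_1\le r(1)V_0$.
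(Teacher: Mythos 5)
The paper does not actually prove this statement --- it is imported verbatim as \cite[Theorem~2.1]{Tuominen-and-Tweedie} --- so there is no internal proof to compare against, and your proposal has to be judged on its own merits as a reconstruction.

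Most of the cycle is sound in outline, but the key step $(\textup{i})\Rightarrow(\textup{ii})$ as written contains a genuine error. With your choice
\[
V_n(x)=E_x\!\left[\sum_{k=0}^{\tau_C-1} r(k+n)f(x_k)\right],\qquad \tau_C=\min\{t\ge 1: x_t\in C\},
\]
the one-step decomposition gives, for \emph{every} $x$ (not only $x\in C$),
\[
PV_{n+1}(x)=V_n(x)-r(n)f(x)+E_x\bigl[1_{\{x_1\in C\}}V_{n+1}(x_1)\bigr].
\]
The last term is nonnegative and is supported wherever $P(x,C)>0$, which will generally include a large portion of $C^c$. It therefore cannot be absorbed into $b\,r(n)1_C(x)$: off $C$ your proposed $V_n$ violates the required inequality $PV_{n+1}\le V_n - r(n)f + br(n)1_C$. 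The remedy is to replace the return time $\tau_C$ by the hitting time $\sigma_C=\min\{t\ge 0: x_t\in C\}$ in the definition of $V_n$, so that $V_{n+1}$ vanishes on $C$ and the offending term $E_x[1_{\{x_1\in C\}}V_{n+1}(x_1)]$ disappears identically; the extra contribution is then confined to $x\in C$, where it is controlled (via submultiplicativity) by $\sup_{z\in C}E_z\bigl[\sum_{k=0}^{\tau_C-1}r(k)f(x_k)\bigr]$, yielding the correct indicator structure. This is precisely the construction in Tuominen and Tweedie.

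Two smaller issues. In $(\textup{iii})\Rightarrow(\textup{iv})$, stratifying the full absorbing set by the minimum number of steps needed to reach $A$ does not give $(f,r)$-regular strata: a short deterministic minimum path says nothing about the expectation of $\sum r(k)f(x_k)$ up to a hitting time. The correct stratification is by sublevel sets of the functional $x\mapsto E_x\bigl[\sum_{k=0}^{\sigma_A-1}r(k)f(x_k)\bigr]$, together with the standard lemma that bounded sublevel sets of such a regularity functional inside a full absorbing set are $(f,r)$-regular. In $(\textup{ii})\Rightarrow(\textup{iii})$, your $M_n$ is not a priori nonnegative; the optional stopping application is fine only because before $\tau_C$ the indicator term is active at most at $k=0$, so $M_{n\wedge\tau_C}$ is uniformly bounded below --- worth stating explicitly, since otherwise the supermartingale stopping step is not justified.
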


\begin{theorem}[\protect{\cite[Theorem
    4.1]{Tuominen-and-Tweedie}}]\label{theorem: ergodicity}
Suppose an aperiodic and irreducible Markov chain $\{x_t\}$ satisfies the
equivalent conditions  (i)-(iv)
of Theorem \ref{Main Theorem Tweedie} with $f: \state\rightarrow [1,\infty)$ and
$r \in \Lambda$. Then the Markov chain is $(f,r)$-ergodic, i.e.,
\[
\lim_{n\to \infty} r(n)\lVert P^n(x,\,\cdot\, )-\pi \rVert_f =0.
\]
\end{theorem}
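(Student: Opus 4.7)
The strategy is to combine Nummelin's splitting technique with a coupling argument against a stationary copy of the chain, and then use condition (i) of Theorem \ref{Main Theorem Tweedie} (the summable $r$-weighted excursion condition) to control the coupling time.

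First, I would reduce to the case of an atom via splitting. Because the chain is aperiodic and irreducible, Theorem \ref{lemma: petite is small} lets me assume the petite set in condition (i) is in fact small. After passing to a sub-small set if needed (and replacing $P$ by $P^m$ in the $m$-step splitting of Section \ref{Splitting} if necessary), I may assume $C$ is $(1,\epsilon,\nu)$-small, construct the split chain $\{z_t=(x_t,a_t)\}$ on $\state\times\{0,1\}$ with atom $S=C\times\{1\}$, and lift $f$ to $\bar f(x,a)=f(x)$. The marginal of $\{z_t\}$ on the first coordinate still evolves as $\{x_t\}$, and the split chain inherits an invariant measure $\check\pi$ with first-coordinate marginal $\pi$. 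A short calculation, using that the return time to $S$ from inside $C\times\{0,1\}$ is a geometric($\epsilon$)-number of excursions of $\{x_t\}$ from $C$, and using the subadditivity $r(m+n)\le r(m)r(n)$ from \eqref{eq:r dynamics}, transfers the $(f,r)$-regularity in condition (i) to
\[
\sup_{z\in S} E_z\Big[\sum_{k=0}^{\tau_S-1} r(k)\bar f(z_k)\Big]<\infty.
\]

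Second, I would set up the coupling. Run a stationary copy $\{z'_t\}$ with $z'_0\sim\check\pi$, independent of $\{z_t\}$ with $z_0=(x,0)$, until the first time $T$ at which $z_t\in S$ and $z'_t\in S$ simultaneously; from that instant couple them by drawing a common next state from $\nu$ and feeding them common innovations thereafter, so $z_t=z'_t$ for $t\ge T$. Since the marginal laws are unchanged, the coupling inequality in its $f$-norm form gives
\[
r(n)\lVert P^n(x,\cdot)-\pi\rVert_f \;\le\; r(n)\,E\bigl[(\bar f(z_n)+\bar f(z'_n))\,1_{\{T>n\}}\bigr].
\]

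Third, I would bound the right-hand side using the regularity of $S$. On $\{T>n\}$ apply $r(n)\le r(T)$ (since $r$ is nondecreasing) and split by the last visit of each chain to $S$ before time $n$; together with $r(m+n)\le r(m)r(n)$, the right-hand side is dominated by the tail of
\[
E_x\Big[\sum_{k=0}^{T-1} r(k)\bar f(z_k)\Big] + E_{\check\pi}\Big[\sum_{k=0}^{T-1} r(k)\bar f(z'_k)\Big],
\]
which is finite by Step 1 (applied via Kac's formula to get $E_{\check\pi}[\tau_S]<\infty$, and via the regularity of $S$ to handle the excursions of each chain). Dominated convergence then delivers $r(n)\lVert P^n(x,\cdot)-\pi\rVert_f\to 0$.

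The main obstacle is the third step: rigorously controlling the simultaneous hitting time $T$ of two independent chains in terms of single-chain excursion moments. The standard device is a first-entrance/last-exit decomposition at $S$ applied to the product chain, together with the subadditive control $r(m+n)\le r(m)r(n)$; making this decomposition cleanly splits $r(T)$ into factors that can be bounded separately by the $(f,r)$-regularity of $S$ proved in Step 1 and by a geometric tail coming from the $\epsilon$-probability of the bell ringing on each visit to $C$.
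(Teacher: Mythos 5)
Your proposal takes a genuinely different route from the one the paper cites. The paper (following Tuominen and Tweedie, Theorem 4.1, and Section 13.2.3 of \cite{MeynBook}) proves $(f,r)$-ergodicity by a purely analytic argument: the first-entrance last-exit decomposition of $P^n(x,\cdot)$ at an atom (taboo probabilities before the first visit, a convolution over visits to the atom, and a contribution after the last exit), combined with the $f$-norm ergodic theorem for the renewal sequence $u(n)=P^n(\alpha,\alpha)$. No second chain, no coupling time, no product chain is involved; the $(f,r)$-regularity hypothesis plugs directly into the three factors of the decomposition.

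Your coupling route has a genuine gap precisely where you flag "the main obstacle," and the resolution you sketch does not close it. After splitting, what must be controlled is $r(T)$ (or rather $E[\,\cdot\,1_{\{T>n\}}]$) where $T$ is the first \emph{simultaneous} hit of $S$ by the two independent split chains. This is a hitting time for the \emph{product} chain of $S\times S$, and $(f,r)$-regularity of $S$ for each marginal chain does not, in general, transfer to the product chain in the subgeometric setting. In the geometric case the transfer works because the geometric rate multiplies cleanly across a geometric number of marginal excursions (this is the mechanism behind Roberts--Rosenthal); for a general subgeometric $r$ the bound $r(T)\le\prod_i r(\sigma_i)$ over a random geometric number of excursion lengths $\sigma_i$ requires $E[r(\sigma_i)]$ to be close to $1$, which is not implied by the hypothesis $\sup_{z\in S}E_z[\sum_{k<\tau_S} r(k)\bar f(z_k)]<\infty$. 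Your claimed dominating quantity $E_x[\sum_{k=0}^{T-1} r(k)\bar f(z_k)]$ being finite is therefore exactly the thing you would still need to prove, and "a geometric tail coming from the $\epsilon$-probability of the bell ringing" does not produce it, since the bell can only ring when \emph{both} chains are in $C$ and the waiting time for that event is the unresolved quantity. (There is also a secondary issue in Step 1: replacing $P$ by $P^m$ to reduce to a $1$-step small set changes the rate function from $r(n)$ to roughly $r(mn)$, and this needs to be tracked.) Subgeometric coupling proofs do exist in the literature, but they require substantially more machinery than sketched here; the first-entrance last-exit/renewal argument the paper points to avoids the product-chain difficulty altogether.
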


The proof of this result relies on a first-entrance last-exit decomposition \cite{MeynBook} of the transition probabilities; see Section 13.2.3 of \cite{MeynBook}.

The conditions of Theorem \ref{Main Theorem Tweedie} may be hard to check,
especially (ii), comparing a sequence of Lyapunov functions $\{V_k\}$ at each time
step. We briefly discuss the methods of Douc et al. \cite{Douc} (see also Hairer
\cite{Hairer}) that extend the subgeometric ergodicity results and show how to construct subgeometric rates of ergodicity from a
simpler drift condition. \cite{Douc} assumes that there exists a function $V:
\state\rightarrow [1, \infty]$, a concave monotone non-decreasing differentiable
function $\phi:[1,\infty]\rightarrow(0,\infty]$, a set $C \in \clB(\state)$ and
a constant $b \in \mathbb{R}$ such that
\begin{equation}
  \label{eq_drift}
PV+\phi {\circ} V \leq V +b1_C.
\end{equation}

If an aperiodic and irreducible Markov chain $\{x_t\}$ satisfies the above with
a petite set $C$, and if $V(x_0) < \infty$, then it can be shown that $\{x_t\}$
satisfies Theorem \ref{Main Theorem Tweedie}(ii). Therefore $\{x_t\}$ has
invariant distribution $\pi$ and is $(\phi {\circ} V, 1)$-ergodic so that
$\lim\limits_{n\rightarrow \infty}\lVert P^n(x,\,\cdot\, )-\pi(\,\cdot\,
)\rVert_{\phi {\circ} V}=0$ for all $x$ in the set $\{x:V(x)< \infty\}$  of
$\pi$-measure 1. The results by Douc et al.\  build then on trading off
$(\phi \circ V,1)$ ergodicity for $(1,r_\phi)$-ergodicity for some rate function
$r_\phi$, by carefully constructing the function utilizing
concavity; see Propositions 2.1 and 2.5 of \cite{Douc} and Theorem 4.1(3) of
\cite{Hairer}.

To achieve ergodicity with a nontrivial rate and norm one can invoke a result
involving the class of {\it pairs of ultimately non-decreasing functions},
defined in \cite{Douc}. The class $\mathcal{Y}$ of  pairs of ultimately non-decreasing
  functions consists of  pairs
$\Psi_1,\Psi_2:\state\rightarrow[1,\infty)$ such that $\Psi_1(x)\Psi_2(y)\leq
x+y$ and $\Psi_i(x) \rightarrow \infty$ for one of $i=1,2$.

\begin{proposition}\label{prop: Y ergo ext}
Suppose $\{x_t\}$ is an aperiodic and irreducible Markov chain that is both
$(1,r)$-ergodic and $(f,1)$-ergodic for some $r \in \Lambda$ and
$f:\state\rightarrow [1,\infty)$. Suppose $\Psi_1,\Psi_2:\state\rightarrow
[1,\infty)$ are a pair of ultimately non-decreasing functions. Then $\{x_t\}$ is
$(\Psi_1\circ f, \Psi_2 \circ r)$-ergodic.
\end{proposition}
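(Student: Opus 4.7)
The plan is to prove the proposition by a pointwise decomposition trick that trades off between the two known ergodicities, leveraging the defining inequality $\Psi_1(a)\Psi_2(b)\leq a+b$ of the pair class~$\mathcal{Y}$. Fix $x\in\state$ and set $\mu_n \eqdef P^n(x,\,\cdot\,) - \pi(\,\cdot\,)$. The target is to show $\Psi_2(r(n))\,\lVert\mu_n\rVert_{\Psi_1\circ f}\to 0$.

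First, for any measurable $g$ with $|g(y)|\le \Psi_1(f(y))$, substitute $a=f(y)$, $b=r(n)$ into the pair inequality to obtain the pointwise bound $|\Psi_2(r(n))\,g(y)|\le f(y)+r(n)$. Set $\tilde g(y)\eqdef \Psi_2(r(n))\,g(y)$ and split it as $\tilde g = h_1 + h_2$ with
\[
h_1(y) \eqdef \sign(\tilde g(y))\min\bigl(|\tilde g(y)|,\,f(y)\bigr), \qquad h_2(y) \eqdef \tilde g(y)-h_1(y).
\]
Both are measurable; a brief case analysis on whether $|\tilde g(y)|\le f(y)$ or not shows $|h_1|\le f$ and $|h_2|\le r(n)$ everywhere.

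By linearity and the triangle inequality applied to $\int \tilde g\, d\mu_n$,
\[
\Psi_2(r(n))\left|\int g\,d\mu_n\right|\ \le\ \left|\int h_1\,d\mu_n\right| + \left|\int h_2\,d\mu_n\right|\ \le\ \lVert \mu_n\rVert_f \;+\; r(n)\,\lVert\mu_n\rVert_{TV}.
\]
Since the right-hand side is independent of $g$, taking the supremum over all $g$ with $|g|\le\Psi_1\circ f$ yields
\[
\Psi_2(r(n))\,\lVert P^n(x,\,\cdot\,)-\pi\rVert_{\Psi_1\circ f} \;\le\; \lVert P^n(x,\,\cdot\,)-\pi\rVert_f \;+\; r(n)\,\lVert P^n(x,\,\cdot\,)-\pi\rVert_{TV}.
\]
By the $(f,1)$-ergodicity hypothesis the first term tends to $0$, and by the $(1,r)$-ergodicity hypothesis so does the second, delivering the conclusion.

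The argument is largely routine once the decomposition is in hand; the only subtle point is making the split $\tilde g = h_1 + h_2$ measurable and preserving the sign of $\tilde g$ so that the bound is uniform over the admissible class of test functions. The fact that $\Psi_1$ and $\Psi_2$ are ``ultimately non-decreasing'' (and that one of them tends to $\infty$) is not explicitly used in the estimation itself—only the product bound $\Psi_1(a)\Psi_2(b)\le a+b$ is used—so no further analytic input is required.
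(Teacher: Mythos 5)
Your proof is correct. Note first that the paper does not actually supply a proof of this proposition; it is presented as a known result from Douc et al.\ (\cite{Douc}, cf.\ their Proposition~2.5), so there is no in-paper argument to compare against. That said, your reasoning is sound and is essentially the canonical interpolation argument one would expect here: the inequality $\Psi_1(a)\Psi_2(b)\le a+b$ gives the pointwise bound $\Psi_2(r(n))\,|g(y)|\le f(y)+r(n)$ for $|g|\le \Psi_1\circ f$, and from there the $f$-norm and TV-norm contributions are separated and killed by the two ergodicity hypotheses. Your explicit split $\tilde g = h_1 + h_2$ is valid and is verified correctly by your case analysis.

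One cosmetic simplification worth noting: since for a signed measure $\mu$ one has $\lVert\mu\rVert_f=\int f\,d|\mu|$ (take $g$ equal to $f$ times the sign of the Radon--Nikodym derivative $d\mu/d|\mu|$), the decomposition of the test function can be bypassed entirely. Indeed
\[
\Psi_2(r(n))\,\lVert\mu_n\rVert_{\Psi_1\circ f}
=\Psi_2(r(n))\int \Psi_1(f)\,d|\mu_n|
\le \int\bigl(f+r(n)\bigr)\,d|\mu_n|
=\lVert\mu_n\rVert_f + r(n)\,\lVert\mu_n\rVert_{TV},
\]
after which the two hypotheses finish the job exactly as you describe. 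This removes the only ``subtle point'' you flag (measurability and sign-preservation of the split), though your version is not wrong, merely longer. You are also right that the ``ultimately non-decreasing'' and ``$\Psi_i\to\infty$ for one $i$'' conditions play no role in this estimate --- they serve only to guarantee the resulting $(\Psi_1\circ f,\,\Psi_2\circ r)$-ergodicity statement is a nontrivial improvement over the hypotheses.
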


Therefore we can show that if  $(\Psi_1,\Psi_2)\in \mathcal{Y}$ and  a Markov
chain satisfies the condition \eqref{eq_drift}, then it is $(\Psi_1 \circ
\phi\circ V, \Psi_2 \circ
r_\phi)$-ergodic.

\section{Rates of Convergence under Random-Time State-Dependent Drift}\label{SectionRTSDD}

The second condition of Theorem \ref{Main Theorem Tweedie} assumes that a
deterministic sequence of functions $\{V_n \}$ exists and satisfies the drift condition (\ref{TTDriftCond}).

We apply Theorem \ref{Main Theorem Tweedie} to the case where the
Foster-Lyapunov drift condition holds not for every $n$ but for a sequence of
stopping times $\{\tau_n\}$. Our goal is to reveal a relation between the
stopping times $\{\tau_n\}$ where a drift condition holds and the rate function
$r$, so that we obtain $(f,r)$-ergodicity.

\subsection{A general result on ergodicity }

The following result builds on and generalizes Theorem 2.1 in \cite{YukMeynTAC2010}.

\begin{theorem}\label{Prop: random time app}
Let $\{x_t\}$ be an aperiodic and irreducible Markov chain with a small set $C$. Suppose there are functions $V: \state \rightarrow (0,\infty)$ with $V$ bounded on $C$, $f:\state \rightarrow [1,\infty), \delta :\state \rightarrow [1,\infty)$, a constant $b \in \mathbb{R}$, and $r \in \Lambda$ such that for a sequence of stopping times $\{\tau_n\}$
\begin{eqnarray}
\label{eq: random drfit}E[V(x_{\tau_{n+1}})\mid x_{\tau_n}]\leq V(x_{\tau_n})-\delta(x_{\tau_n}) +b1_C(x_{\tau_n}) \nonumber\\
\label{eq:random sum} E\bigg[\sum\limits_{k=\tau_n}^{\tau_{n+1}-1}f(x_k)r(k)\,
\Big| \,
\clF_{\tau_n}\bigg] \leq \delta(x_{\tau_n}).
\end{eqnarray}
Then $\{x_t\}$ satisfies Theorem \ref{Main Theorem Tweedie} and is
$(f,r)$-ergodic.
\end{theorem}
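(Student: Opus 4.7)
The plan is to verify condition (i) of Theorem \ref{Main Theorem Tweedie} with the small set $C$ itself, and then to invoke Theorem \ref{theorem: ergodicity} to conclude $(f,r)$-ergodicity. Concretely, the target is
\[
\sup_{x\in C} E_x\!\left[\sum_{k=0}^{\tau_C-1} r(k)\, f(x_k)\right] < \infty.
\]
Since $r\in\Lambda$ is non-decreasing with $r(1)\geq 2$, the hypotheses here dominate those of Theorem \ref{thm5}, so $\{x_t\}$ is positive Harris recurrent with an invariant distribution $\pi$, and the first return of the sampled chain $N := \inf\{n\geq 1 : x_{\tau_n}\in C\}$ is almost surely finite.

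First I would bound $\sum_{n<N} E_x[\delta(x_{\tau_n})]$. Iterating the first drift inequality shows that $M_n := V(x_{\tau_n}) + \sum_{k=0}^{n-1}\bigl[\delta(x_{\tau_k}) - b\,1_C(x_{\tau_k})\bigr]$ is a supermartingale with respect to $\{\clF_{\tau_n}\}$. Applying optional stopping at the bounded index $N\wedge m$, taking expectations, and using $V\geq 0$ yields
\[
\sum_{n=0}^{(N\wedge m)-1} E_x[\delta(x_{\tau_n})] \leq V(x) + b\sum_{n=0}^{(N\wedge m)-1} E_x[1_C(x_{\tau_n})].
\]
For $x\in C$, the definition of $N$ forces the indicator sum to equal $1$ (only $n=0$ contributes); letting $m\to\infty$ via monotone convergence gives $\sum_{n=0}^{N-1} E_x[\delta(x_{\tau_n})] \leq V(x) + b$.

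Next, since $\{N>n\}\in\clF_{\tau_n}$, the second drift inequality combined with the tower property yields
\begin{align*}
E_x\!\left[\sum_{k=0}^{\tau_N-1} r(k) f(x_k)\right]
&= \sum_{n=0}^{\infty} E_x\!\left[1_{\{N>n\}}\,E\!\left[\sum_{k=\tau_n}^{\tau_{n+1}-1} r(k) f(x_k)\,\Big|\,\clF_{\tau_n}\right]\right]\\
&\leq \sum_{n=0}^{N-1} E_x[\delta(x_{\tau_n})] \;\leq\; V(x)+b.
\end{align*}
Because $x_{\tau_N}\in C$ and $N\geq 1$ together give $\tau_N\geq \tau_C$, we conclude $E_x\bigl[\sum_{k=0}^{\tau_C-1} r(k) f(x_k)\bigr] \leq V(x) + b$. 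Taking the supremum over $x\in C$, on which $V$ is bounded by assumption, verifies condition (i) of Theorem \ref{Main Theorem Tweedie} for the (small and hence petite) set $C$, and Theorem \ref{theorem: ergodicity} then produces the desired $(f,r)$-ergodicity.

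The main technical obstacle is the optional-stopping step: the supermartingale $M_n$ is not bounded below a priori, because the $-b\,1_C$ contributions can accumulate negatively when $b$ is large, so one must work with the truncated stopping index $N\wedge m$ and pass to the limit by monotone convergence, exploiting $V\geq 0$ and $\delta\geq 1$. A secondary point is justifying $N<\infty$ almost surely, which follows from the positive Harris recurrence of the sampled chain inherited from Theorem \ref{thm5}.
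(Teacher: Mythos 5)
Your proof is correct and mirrors the paper's argument step for step: both define the sampled return time $\gamma_C$ (your $N$), derive $E_x\bigl[\sum_{n=0}^{\gamma_C-1}\delta(x_{\tau_n})\bigr]\le V(x)+b$ via a supermartingale/comparison argument and monotone convergence, then apply the second drift bound, the tower property, and $\tau_C\le\tau_{\gamma_C}$ to verify condition (i) of Theorem~\ref{Main Theorem Tweedie}. The only extraneous element is your appeal to Theorem~\ref{thm5} for $N<\infty$ a.s.\ (and the claimed domination of its hypotheses is not quite right, since $r\in\Lambda$ need not satisfy $r(0)\ge 1$); finiteness of $N$ in fact follows directly from the monotone-convergence bound combined with $\delta\ge 1$.
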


\begin{proof}
The proof is similar to the proof of the Comparison Theorem of \cite{MeynBook} as well as Theorem 2.1(i) in \cite{YukMeynTAC2010}. We may assume $r\in \Lambda_0$. We define sampled hitting
  times $\gamma_B =\min\{n>0:x_{\tau_n} \in B\}$ for all $B \in
  \clB^+(\state)$ and $\gamma_B^N=\min(N,\gamma_B)$. Since $\{x_{\tau_n}\}$ satisfies the drift condition, it
  follows that for $x \in C$
\begin{align*}
  E_x\bigg[\sum\limits_{n=0}^{\gamma^N_C-1}\delta(x_{\tau_n})\bigg] \leq &
  V(x)+bE_x\bigg[\sum\limits_{n=0}^{\gamma^N_C-1}1_C(x_{\tau_n})\bigg] \leq V(x)+b
\end{align*}
which is finite since $V$ is bounded on $C$ by assumption. An application of the monotone convergence theorem then gives
\begin{align*}
  E_x\bigg[\sum\limits_{n=0}^{\gamma_C-1}\delta(x_{\tau_n})\bigg] \leq &
  V(x)+bE_x\bigg[\sum\limits_{n=0}^{\gamma_C-1}1_C(x_{\tau_n})\bigg] \leq V(x)+b
\end{align*}

Since $\tau_B \leq \tau_{\gamma_B}$ for all $B \in \clB^+(\state)$ by
definition, we have
\[
E_x\bigg[\sum\limits_{n=0}^{\tau_C-1}f(x_n)r(n)\bigg]\leq
E_x\bigg[\sum\limits_{n=0}^{\gamma_C-1}\delta(x_{\tau_n})\bigg] \leq V(x)+b
\]
so  $C$
is a petite set which  satisfies
\[
\sup_{x\in
  C}E_x\bigg[\sum\limits_{n=0}^{\tau_C-1}r(n)f(x_n)\bigg]\leq \sup_{x\in C}V(x)+b <
\infty.
\]
This  means that the Markov chain $\{x_n\}$ satisfies Theorem \ref{Main
  Theorem Tweedie}(i) and is $(f,r)$-ergodic.
\end{proof}

\subsection{On petite sets and sampling}

Unfortunately the techniques we reviewed earlier that rely on petite sets (specifically
Theorem \ref{theorem: petite set finite mean}) become unavailable in the random
time drift setting as a petite set $C$ for $\{x_n\}$ is not necessarily petite
for $\{x_{\tau_n}\}$. To be able to relax conditions on the behavior of $V$ on
$C$,  we can place one of the following two conditions on the stopping times or
require that $V$ is bounded on $C$.

For an analogous application of Theorem \ref{theorem: petite set finite mean} in
the random time setting we define sampled hitting times for any $B \in
\clB^+(\state)$ as $\gamma_B=\min\{n>0: x_{\tau_n} \in B\}$.

\begin{lemma}\label{ch5: random sampling preserves smallness}
  Suppose $\{x_t\}$ is an aperiodic and irreducible Markov chain. If there
  exists sequence of stopping times $\{\tau_n\}$ independent of $\{x_t\}$, then
  any $C$ that is small for $\{x_t\}$ is petite for $\{x_{\tau_n}\}$.
\end{lemma}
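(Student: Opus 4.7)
The plan is to exploit the independence of $\{\tau_n\}$ from $\{x_t\}$ to express the distribution of $x_{\tau_n}$ starting from $x_0 = x$ as a mixture of the kernels $P^k(x,\,\cdot\,)$ weighted by the distribution of $\tau_n$, and then to combine this with the smallness of $C$ together with a Chapman--Kolmogorov shift to produce the minorization required for petiteness.

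First, I would unpack the hypothesis: since $C$ is small, there exist $m \ge 1$, $\epsilon > 0$, and a nontrivial positive measure $\nu$ on $\clB(\state)$ with $P^m(x,B) \ge \epsilon\,\nu(B)$ for every $x \in C$ and $B \in \clB(\state)$. Applying Chapman--Kolmogorov one step further, for every $j \ge 0$ and $x \in C$,
\[
P^{m+j}(x,B) = \int P^m(x,dy)\,P^j(y,B) \ge \epsilon\,(\nu P^j)(B),
\]
so smallness at time $m$ automatically propagates to a minorization by the nontrivial measure $\nu P^j$ at every later time $k = m+j$.

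Next, I would write $p_n(k) = \Prob(\tau_n = k)$. Since $\{\tau_n\}$ and $\{x_t\}$ are independent, conditioning on $\tau_n$ gives
\[
\Prob(x_{\tau_n} \in B \mid x_0 = x) = \sum_{k=0}^{\infty} p_n(k)\,P^k(x,B),
\]
which I would read as the $n$-step kernel of the sampled chain $\{x_{\tau_n}\}$. Because the non-decreasing sampling times must tend to $\infty$ for the sampling scheme to be nontrivial, there exists some $N$ with $\Prob(\tau_N \ge m) > 0$, and hence $p_N(k^{\star}) > 0$ for at least one $k^{\star} \ge m$. Taking the sampling distribution $a = \delta_N$ on $\mathbb{Z}_+$, this will yield, for every $x \in C$,
\[
\sum_{n=0}^{\infty} a(n)\,\Prob(x_{\tau_n} \in B \mid x_0 = x) \ge p_N(k^{\star})\,P^{k^{\star}}(x,B) \ge p_N(k^{\star})\,\epsilon\,(\nu P^{k^{\star} - m})(B),
\]
exhibiting $C$ as petite for $\{x_{\tau_n}\}$ with distribution $a$ and minorizing measure $\kappa = p_N(k^{\star})\,\epsilon\,\nu P^{k^{\star} - m}$.

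The principal obstacle I anticipate is the potential arithmetic mismatch between the smallness index $m$ and the supports of the marginal distributions $p_n$: the sampling times $\tau_n$ may happen to be supported on a strict sublattice of $\mathbb{Z}_+$ while $m$ lies outside that sublattice. The Chapman--Kolmogorov shift in the first step is designed precisely to bypass this issue, as it replaces the single minorization at time $m$ by a whole family of minorizations at every $k \ge m$; this reduces the combinatorial problem to finding just one $k^{\star} \ge m$ in the support of some $p_N$, which is automatic once $\tau_n \to \infty$.
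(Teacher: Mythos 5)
Your proposal is essentially the same as the paper's proof: both express the transition kernel of the sampled chain as a mixture $\sum_k \Prob(\tau_n = k)\,P^k(x,\cdot)$ using independence, then Chapman--Kolmogorov combined with the $(m,\epsilon,\nu)$-smallness of $C$ yields a minorizing measure. The one substantive difference is how the ``index mismatch'' between $m$ and the support of the $\tau_n$ is handled: the paper works only with $P^{\tau_1}$ and keeps the entire tail $\sum_{k\ge m}\Prob(\tau_1=k)\,\epsilon\,(\nu P^{k-m})(\cdot)$ as the minorizing measure $\kappa$, which is non-trivial precisely when $\Prob(\tau_1\ge m)>0$; you instead allow yourself to pass to $\tau_N$ for some larger $N$ and extract a single atom $k^\star\ge m$, which makes the non-degeneracy of $\kappa$ more visibly automatic (at the cost of a somewhat informal appeal to $\tau_n\to\infty$). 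Either way the argument is the same in spirit, and your version is marginally more careful about the degenerate case the paper implicitly rules out.
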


\begin{proof}
  Since $C$ is petite, it is small by Theorem \ref{lemma: petite is small} for
  some $m$.  Let $C$ be $(m, \delta, \nu)$-small for $\{x_t\}$.
\begin{eqnarray}
&&  P^{\tau_{1}}(x,\,\cdot\,)= \sum\limits_{k=1}^{\infty}P(\tau_{1}=k)P^k(x,\,\cdot\,) \nonumber \\
&&  \geq  \sum\limits_{k=m}^{\infty}P(\tau_{1}=k)\int P^m(x,dy)P^{k-m}(y,\,\cdot\,) \nonumber\\
&& \geq \sum\limits_{k=m}^{\infty} P(\tau_{1}=k) \int 1_C(x) \delta \nu(dy)  P^{k-m}(y,\,\cdot\,)
\end{eqnarray}
which is a well defined measure. Therefore defining $\kappa(\,\cdot\,)=\int
\nu(dy) \sum\limits_{k=m}^{\infty} P(\tau_{1}=k) P^{k-m}(y, \,\cdot\,)$, we have
that $C$ is $(1,\delta,\kappa)$-small for $\{x_{\tau_n}\}$.
\end{proof}
The above allows us to uniformly bound
$E_x\big[\sum_{n=0}^{\gamma_B-1}1_C(x_{\tau_n})\big]$ when the stopping times are
independent of the Markov chain, by an application of Theorem \ref{theorem:
  petite set finite mean} and (\ref{supportingPetite}).

The independence of stopping times $\{\tau_n\}$ of $\{x_t\}$ is a restrictive
condition that {\it event triggered systems} cannot satisfy since in such systems the stopping times  depend explicitly on the state process
hitting certain sets. One useful example where independence of stopping times
can be enforced is given in \cite{Quevedo2} where a system controlled over an
unreliable network is affected by variable transmission delays between the controller and the plant.

For the event-triggered case we will derive a useful result which will be used to show that in the drift equations of the form (\ref{eq: random drfit}), the Lyapunov function $V$ may not need to be assumed bounded on $C$.

The proof of the next result follows directly from the definition of $\tau_n$.
\begin{lemma}\label{ch5: event-triggered case}
  Suppose $\{\tau_n\}$ are the subsequent hitting times of a sequence of sets
  $\{E_n\}$ in $\clB^+(\state)$, so that $\tau_{n+1}=\min\{t>\tau_n:x_t \in
  E_{n+1}\}$.  If $\bigcap_{n=0}^{\infty} E_n \in \clB^+(\state)$ then for any
  reachable $B \subset \bigcap_n E_n$, we have $\tau_{\gamma_B}=\tau_B$.
\end{lemma}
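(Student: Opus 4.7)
The plan is to unwind the definitions: $\tau_B=\min\{t\geq 1: x_t \in B\}$, $\gamma_B=\min\{n>0: x_{\tau_n}\in B\}$, and the recursion $\tau_{n+1}=\min\{t>\tau_n: x_t\in E_{n+1}\}$, under the hypothesis $B\subset \bigcap_{n} E_n$. The key observation is that since $B\subset E_n$ for every $n\geq 1$, whenever $x_t\in B$, the index $t$ is always a legitimate candidate for each successive stopping time, so the sampling cannot overshoot $\tau_B$.

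Concretely, I would first prove the monotonicity claim: for every $n\geq 1$, if $\tau_{n-1}<\tau_B$ then $\tau_n \leq \tau_B$. This follows directly from the defining recursion, because $\tau_B>\tau_{n-1}$ and $x_{\tau_B}\in B\subset E_n$, so $\tau_B$ is an element of the set over which the minimum defining $\tau_n$ is taken. Since the stopping times are strictly increasing (as $\tau_{n+1}>\tau_n$) and bounded above by $\tau_B$ in this regime, induction on $n$ yields the existence of a smallest index $n^\ast\geq 1$ with $\tau_{n^\ast}=\tau_B$.

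Next I would identify $n^\ast$ with $\gamma_B$. On one hand, $x_{\tau_{n^\ast}}=x_{\tau_B}\in B$, so $\gamma_B\leq n^\ast$. On the other hand, for every $1\leq n<n^\ast$ we have $\tau_n<\tau_B$ with $\tau_n\geq 1$, so by the very definition of $\tau_B$ as the first positive time the chain enters $B$, we get $x_{\tau_n}\notin B$; hence $\gamma_B\geq n^\ast$. Combining the two inequalities gives $\gamma_B=n^\ast$, and therefore $\tau_{\gamma_B}=\tau_{n^\ast}=\tau_B$.

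There is no real obstacle here; the argument is a bookkeeping exercise once one notices that containment $B\subset\bigcap_n E_n$ makes every visit to $B$ an automatic ``event'' for the adaptive sampling scheme. The only minor care needed is the boundary case $\tau_0=0$: since $\tau_B\geq 1$ we always have $\tau_0<\tau_B$, so the inductive base step is valid and the restriction $n>0$ in the definition of $\gamma_B$ does not cause problems.
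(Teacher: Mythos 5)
Your proof is correct and fills in exactly the bookkeeping that the paper omits; the paper simply asserts that the lemma "follows directly from the definition of $\tau_n$," and your argument (inducting to show the sampled times cannot overshoot $\tau_B$, then identifying the index at which equality occurs with $\gamma_B$) is the direct unwinding that this assertion refers to.
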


\begin{assumption} \label{completeAssumption}
The stopping times are as in Lemma \ref{ch5: event-triggered case} and $C \subset \bigcap_{n=0}^{\infty} E_n$.
\end{assumption}

Recall that by Theorem \ref{theorem: petite set
  finite mean} a petite set $C$ is petite with a maximal irreducibility measure
$\kappa$ for a distribution $a$ with finite mean, so for any $B \in
\clB^+(\state)$ we have that $\sum \limits_{n=0}^{\infty}a(n)P^{n}(x,B)
\geq \kappa(B) 1_C(x)$. Assumption \ref{completeAssumption} then implies that if any $C$ is petite for $\{x_t\}$, then for some $\tilde{C} \subset C \subset \bigcap_{n=0}^{\infty} E_n$, we have that
\begin{eqnarray}
&& E_x\bigg[\sum\limits_{k=0}^{\gamma_{\tilde{C}}-1} 1_C(x_{\tau_k})\bigg] \nonumber \\
&& \leq E_x\bigg[\sum\limits_{k=0}^{\tau_{\tilde{C}}-1} 1_C(x_k)\bigg] \nonumber \\
&& \leq \frac{1}{\kappa(\tilde{C})}E_x\bigg[\sum\limits_{k=0}^{\tau_{\tilde{C}}-1}\sum_n 1_{\tilde{C}}(x_{k+n})a(n)\bigg] \nonumber \\
&& = \frac{1}{\kappa(\tilde{C})} \sum_n a(n) E_x\bigg[\sum\limits_{k=0}^{\tau_{\tilde{C}}-1} 1_{\tilde{C}}(x_{k+n})\bigg] \nonumber \\
&& \leq \frac{1}{\kappa(\tilde{C})} \sum_n a(n) n = c(\tilde{C})<\infty.
\end{eqnarray}

Hence if the stopping times satisfy the conditions in
   Lemma \ref{ch5: random sampling preserves smallness} or Lemma~\ref{ch5: event-triggered
    case}, we can drop the condition that $V$
  is bounded on $C$, by applying  Chap. 11 of \cite{MeynBook} and Proposition 5.5.6
  of \cite{MeynBook} to $\{x_{\tau_n}\}$ and noting that  by (\ref{eq:random
    sum}),
  $\{x_t\}$ satisfies Theorem \ref{Main Theorem Tweedie}(i). This
  follows since the drift condition implies for any $B \in \clB^+(\state)$,
  $E_x\big[\sum\limits_{n=0}^{\gamma_B-1}\delta(x_{\tau_n})\big] \leq
  V(x)+bE_x\big[\sum\limits_{n=0}^{\gamma_B-1}1_C(x_{\tau_n})\big]$,  where the last term
  is bounded if the conditions of either  Lemma \ref{ch5: random sampling
    preserves smallness} or Lemma~\ref{ch5: event-triggered case} and Assumption \ref{completeAssumption} are
  satisfied.

It is interesting to note that the two extreme cases of the stopping times,
either independent of or completely dependent on the Markov chain, both give
similarly useful relaxations.

\subsection{Subgeometric ergodicity}

The second inequality (\ref{eq:random sum}) may be hard to check as it does not
provide means for checking the relation between the stopping times $\{\tau_n\}$
and the rate function $r$ since the function depends on $k$ in a non-explicit fashion. In the following, the relationship of the criteria with the rate function $r$ is relative to the stopping time.

We assume that $r\in \Lambda_0$ and thus $r$ satisfies $r(m+n)\leq r(m)r(n)$.  

\begin{theorem}\label{ch5: r ergo with uni drift}
  Let $\{x_t\}$ be an aperiodic and irreducible Markov chain with a small set
  $C$. Suppose there exist $V: \state \rightarrow [1,\infty)$ which is
  bounded on $C$ and for some $\epsilon>0$, $\lambda \in (0,1)$, $\lambda V(x) \leq V(x)-\epsilon$ for all $x\notin C$, and $b \in \mathbb{R}$ such that for an increasing sequence of stopping times $\{\tau_n\}$
\begin{eqnarray}\label{DriftCond1}
E\big[V(x_{\tau_{n+1}}) \mid {\cal F}_{\tau_n}] \leq \lambda{V(x_{\tau_n})} +b1_C(x_{\tau_n}).
\end{eqnarray}
If
\begin{eqnarray}\label{DriftCond00}
\sup_k E\big[\sum_{n=\tau_k}^{\tau_{k+1}-1}r(n-\tau_k)\mid \clF_{\tau_k}\big] =: M <
\infty
\end{eqnarray}
and
\begin{eqnarray}\label{DriftCond01}
\sup_k E\big[r(\tau_{k+1}-\tau_k)\mid \clF_{\tau_k}\big]\leq
\lambda^{-1},
\end{eqnarray}
then $\{x_t\}$ satisfies Theorem \ref{Main Theorem Tweedie} with $f=1$ and is
$(1,r)$-ergodic.
\end{theorem}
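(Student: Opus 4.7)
My plan is to verify condition (i) of Theorem~\ref{Main Theorem Tweedie} with $f\equiv 1$ and the given rate $r$; Theorem~\ref{theorem: ergodicity} will then deliver $(1,r)$-ergodicity. Concretely, the task reduces to proving
\[
\sup_{x\in C} E_x\Big[\sum_{k=0}^{\tau_C-1} r(k)\Big] <\infty.
\]

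The first step is to pass from the absolute-time sum to a sum over the sampled chain. With $\gamma_C:=\min\{n>0: x_{\tau_n}\in C\}$ one has $\tau_C\le \tau_{\gamma_C}$; decomposing over the random blocks $[\tau_n,\tau_{n+1})$ and using the submultiplicativity $r(a+b)\le r(a)r(b)$ from \eqref{eq:r dynamics} to factor $r(k)\le r(\tau_n)\,r(k-\tau_n)$, the tower property combined with \eqref{DriftCond00} yields
\[
E_x\Big[\sum_{k=0}^{\tau_C-1} r(k)\Big] \;\le\; M\cdot E_x\Big[\sum_{n=0}^{\gamma_C-1} r(\tau_n)\Big].
\]

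Next I would run a Foster--Lyapunov argument on the sampled chain with the weighted candidate $\phi_n:=r(\tau_n)\,V(x_{\tau_n})$. The $\clF_{\tau_n}$-measurability of $r(\tau_n)$ together with $r(\tau_{n+1})\le r(\tau_n)\,r(\tau_{n+1}-\tau_n)$ give
\[
E[\phi_{n+1}\mid \clF_{\tau_n}] \;\le\; r(\tau_n)\,E\big[r(\tau_{n+1}-\tau_n)\,V(x_{\tau_{n+1}})\,\big|\, \clF_{\tau_n}\big].
\]
Invoking the strong Markov property at $\tau_n$ to view $(\tau_{n+1}-\tau_n,x_{\tau_{n+1}})$ as a functional of the post-$\tau_n$ trajectory from $x_{\tau_n}$ separates the two factors; combining \eqref{DriftCond01} and \eqref{DriftCond1} then gives, on $\{x_{\tau_n}\notin C\}$, the balance $E[\phi_{n+1}\mid \clF_{\tau_n}]\le \phi_n$. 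The auxiliary hypothesis $\lambda V(x)\le V(x)-\epsilon$ on $C^c$ is precisely what extracts an additional $-\epsilon\,r(\tau_n)$ from this balance: after absorbing the $\lambda^{-1}$-factor from \eqref{DriftCond01} into a rescaled Lyapunov function $\tilde\phi_n:=\phi_n\lambda^{n}$, the drift sharpens to
\[
E[\tilde\phi_{n+1}\mid \clF_{\tau_n}] \;\le\; \tilde\phi_n \;-\; \epsilon\, r(\tau_n)\lambda^{n}\qquad \text{on }\{x_{\tau_n}\notin C\}.
\]
A standard comparison-theorem summation of this drift, stopped at $\gamma_C$ and valid because $V$ is bounded on $C$ (so $\tilde\phi_0$ is uniformly controlled for $x\in C$), produces a uniform bound on $E_x\big[\sum_{n<\gamma_C} r(\tau_n)\lambda^n\big]$. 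The geometric moment $E_x[\lambda^{-\gamma_C}]<\infty$, obtained by the analogous supermartingale argument applied to $V(x_{\tau_n})\lambda^{-n}$ under \eqref{DriftCond1}, then lets one convert the $\lambda^n$-weighted bound back to the unweighted sum $E_x[\sum_{n<\gamma_C}r(\tau_n)]$ via a H\"older-type combination. Together with the first step, this verifies Theorem~\ref{Main Theorem Tweedie}(i) and finishes the proof via Theorem~\ref{theorem: ergodicity}.

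The main obstacle is the decoupling $E[r(\tau_{n+1}-\tau_n)V(x_{\tau_{n+1}})\mid \clF_{\tau_n}]\le \lambda^{-1}\,E[V(x_{\tau_{n+1}})\mid \clF_{\tau_n}]$: hypothesis \eqref{DriftCond01} supplies only the \emph{marginal} conditional expectation of $r(\tau_{n+1}-\tau_n)$, and separating it from the factor $V(x_{\tau_{n+1}})$ requires either the strong Markov property used pathwise (as is natural for event-triggered stopping rules of the form in Lemma~\ref{ch5: event-triggered case}) or an independence structure on the increments (Lemma~\ref{ch5: random sampling preserves smallness}). The exact cancellation $\lambda^{-1}\cdot\lambda=1$ between the two opposing rates is exactly what makes the auxiliary condition $\lambda V\le V-\epsilon$ on $C^c$ indispensable: without it one obtains only a zero-drift balance on $\phi_n$, not the strict $r(\tau_n)$-sized decrease needed to close the summation.
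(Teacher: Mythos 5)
Your overall plan---reduce to condition (i) of Theorem~\ref{Main Theorem Tweedie} and control $E_x[\sum_{k<\tau_C}r(k)]$ by decomposing over sampling blocks and using submultiplicativity of $r$---matches the paper's starting point, and the first displayed reduction
\[
E_x\Big[\sum_{k=0}^{\tau_C-1} r(k)\Big] \;\le\; M\,E_x\Big[\sum_{n=0}^{\gamma_C-1} r(\tau_n)\Big]
\]
is essentially the same step the paper takes. Where you diverge is in trying to close the argument with a single coupled Lyapunov candidate $\phi_n=r(\tau_n)V(x_{\tau_n})$, and that is where the proof breaks.

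The decoupling step is a genuine gap, and the repair you gesture at does not work. Writing
\[
E\big[r(\tau_{n+1}-\tau_n)\,V(x_{\tau_{n+1}})\,\big|\,\clF_{\tau_n}\big]\;\le\;\lambda^{-1}\,E\big[V(x_{\tau_{n+1}})\,\big|\,\clF_{\tau_n}\big]
\]
requires that $r(\tau_{n+1}-\tau_n)$ and $V(x_{\tau_{n+1}})$ be (conditionally) uncorrelated, which is not implied by hypotheses \eqref{DriftCond1}, \eqref{DriftCond00}, \eqref{DriftCond01}. Your appeal to the strong Markov property does not deliver this: strong Markov only says that $(\tau_{n+1}-\tau_n,\,x_{\tau_{n+1}})$ is a functional of the post-$\tau_n$ trajectory started from $x_{\tau_n}$, and both coordinates are functionals of the \emph{same} path, so they remain as dependent as they please. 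For event-triggered stopping rules this dependence is in fact structural---$x_{\tau_{n+1}}$ is conditioned to lie in the triggering set, and its law typically depends on the value of $\tau_{n+1}-\tau_n$. The hypothesis \eqref{DriftCond01} controls only the marginal $E[r(\tau_{n+1}-\tau_n)\mid\clF_{\tau_n}]$, not the joint quantity, so the Foster--Lyapunov drift on $\phi_n$ (and hence on $\tilde\phi_n=\phi_n\lambda^n$) is simply not established. The concluding ``H\"older-type combination'' to remove the $\lambda^n$ weight is also left vague and would in any case inherit this gap.

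The paper's route avoids forming the product $r(\tau_{n+1}-\tau_n)\,V(x_{\tau_{n+1}})$ inside a single conditional expectation. It works with two \emph{separate} estimates: (a) the pure-$r$ bound, obtained by factoring $r(\tau_k)\le\prod_{m=1}^{k}r(\tau_m-\tau_{m-1})$ and peeling off one factor at a time by conditioning on $\clF_{\tau_{m-1}}$ and applying \eqref{DriftCond01}, and combining this with \eqref{DriftCond00} via iterated expectations; and (b) the pure-$V$ supermartingale bound $E_x[\lambda^{-n}V(x_{\tau_n})]\le V(x)+\lambda^{-1}b$, obtained from \eqref{DriftCond1} alone via optional stopping, together with the bound $\sup_{x\in C}E_x[\gamma_C]\le(L_1+b)/\epsilon$ from the auxiliary condition $\lambda V\le V-\epsilon$ off $C$. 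Only after these two estimates are in hand are they multiplied together. Thus the role of the condition $\lambda V\le V-\epsilon$ off $C$ is to control the expected number of sampled blocks, not---as in your argument---to supply a strict negative increment for a coupled Lyapunov function.
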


\begin{proof}
Suppose that instead of (\ref{DriftCond1}), we have that
\begin{align}
E[V(x_{n+1})\mid {\cal F}_n]\leq& \lambda V(x_n) +b1_C(x_n). \label{criterionDrift2}
\end{align}

It follows then that the sequence $\{M_n\}$ defined by
  \[M_n= \lambda^{-n} V(x_n) - \sum\limits_{k=0}^{n-1} b1_C(x_k) \lambda^{-(k+1)} ,\]
  with $M_0 = V(x_0)$, is a supermartingale. Then, with (\ref{DriftCond1}), defining
  $\gamma_B^N=\min\{N,\gamma_B\}$ for $B \in \clB^+(\state)$ gives, by Doob's optional sampling theorem,
\begin{eqnarray}
&& E_x\bigg[ \lambda^{-\gamma_B^N} V(x_{\tau_{\gamma_B^N}})\bigg]\leq
V(x) \nonumber \\
&& \quad \quad \quad \quad +E_x\bigg[\sum\limits_{n=0}^{\gamma_B^N-1}b1_C(x_{\tau_n}) \lambda^{-(n+1)} \bigg]
\end{eqnarray}
for any $B \in \clB^+(\state)$, and $N\in \mathbb{Z}_+$.

Since $V$ is bounded above on $C$, we have that $C \subset \{ V \leq L_1\}$ for some $L_1$ and thus,
\[\sup_{x\in
  C}E_x\bigg[ \lambda^{-\gamma_C^N} V(x_{\tau_{\gamma_C^N}})\bigg]\leq
L_1 + \lambda^{-1} b\]
and by the monotone convergence theorem, and the fact that $V$ is bounded from below by 1 everywhere and bounded from above on $C$,
\[\sup_{x\in
  C}E_x\bigg[  \lambda^{-\gamma_C} V(x_{\tau_{\gamma_C}})\bigg]\leq L_1(L_1 +\lambda^{-1}b).\]

Now, for any $r \in \Lambda$, we have
\begin{eqnarray}
&& \sup_{x\in C}E_x\bigg[\sum\limits_{n=0}^{\tau_C-1}r(n)\bigg] \leq \sup_{x\in
    C}E_x\bigg[\sum\limits_{n=0}^{\tau_{\gamma_C}-1}r(n)\bigg], \nonumber 
\end{eqnarray}
and since $r(m+n)\leq r(m)r(n)$ by (\ref{eq:r dynamics}), we obtain through iterated expectations that
\begin{eqnarray}
&&
  \sup_{x\in C}E_x\bigg[\sum\limits_{n=0}^{\tau_C-1}r(n)\bigg] \nonumber \\
&&   \leq\sup_{x\in
    C}E_x\bigg[\sum\limits_{k=0}^{{\gamma_C-1}} E\bigg[ E\Big[\sum\limits_{n=\tau_k}^{\tau_{k+1}-1}r(n-\tau_k)
  \, \Big|\,  \clF_{\tau_k}\Big]  \nonumber \\
&& \quad \quad \times \prod\limits_{m=1}^{k}  r(\tau_{m}-\tau_{m-1}) \bigg] \bigg]. \nonumber
\end{eqnarray}

Now, with (\ref{DriftCond00})-(\ref{DriftCond01}), and by the fact that $V$ is bounded from below by $1$, it follows that
\begin{eqnarray}
&& \sup_{x\in C}E_x[\sum\limits_{n=0}^{\tau_{\gamma_C}-1}r(n)]  \nonumber \\
&& \leq \sup_{x\in
  C}E_x[\sum\limits_{n=1}^{{\gamma_C}-1}E_x[MV(x_{\tau_{n}}) \lambda^{-n}]] \nonumber \\
&& \leq L_1 \sup_{x\in C}E_x[\sum\limits_{n=0}^{{\gamma_C}-1}M(V(x)+\lambda^{-1} b)]
\end{eqnarray}
so that
\[
\sup_{x\in C}E_x\bigg[\sum\limits_{n=0}^{\tau_{\gamma_C}-1}r(n)\bigg]\leq M L_1 (L_1+\lambda^{-1} b) \sup_{x\in
  C}E_x[{\gamma_C}].
\]
From (\ref{DriftCond1}) and the  condition $\lambda V(x)\leq V(x) - \epsilon$ for $x \notin C$, we get that for all $x \in C$
\begin{eqnarray}
&& E_x\big[V(x_{\tau_{\gamma_C}})\big] \nonumber \\
&& \leq \bigg( V(x)-E_x\bigg[\sum_{n=0}^{\gamma_C-1}\epsilon\bigg]+E_x\bigg[\sum_{n=0}^{\gamma_C-1}b1_C(x_{\tau_n})\bigg]\bigg) \nonumber
\end{eqnarray}
and thus,
\[\sup_{x\in C}E_x[\gamma_C] \leq {L_1 + b \over \epsilon}.\]
Therefore $C\in \clB^+(\state)$ is a petite set such that $\sup_{x\in
  C}E_x\big[\sum_{n=0}^{\tau_C-1}r(n)\big]$ is finite and so $\{x_t\}$ satisfies Theorem \ref{Main
  Theorem Tweedie}(i) with $f=1$ and is $(1,r)$-ergodic. \end{proof}

\begin{remark}
  We note that just as  in the previous theorem,  if the stopping times satisfy
  Lemma~\ref{ch5: random sampling preserves smallness},  we can focus on return times
  for a petite set $A \subseteq \{ V \leq L\}$ with $\frac{V(x)}{W(x)}\leq
  V(x)-\epsilon$ for all $x\notin A$ instead of $C$. Similarly,  if the stopping
  times satisfy Lemma \ref{ch5: event-triggered case} and Assumption \ref{completeAssumption} we can focus on return
  times for a petite set $A$ in $\bigcap_n E_n$ with $\frac{V(x)}{W(x)}\leq
  V(x)-\epsilon$ for all $x\notin A \cap \big(\bigcap_n E_n\big)$ instead of $C$. This
  allows us to relax the conditions for $V$.
\end{remark}

As an example, with $r(n)=2 n^{\alpha}$, let for all $k$, $E[\sum_{m=\tau_k}^{\tau_{k+1}-1}
(m-\tau_k)^{\alpha} | {\cal F}_{\tau_k}] < \infty$ and $E[(\tau_{k+1}-\tau_k)^{\alpha}|{\cal F}_{\tau_k}] \leq \lambda^{-1}$. Then, the chain
is {\it polynomially ergodic}. Note that one can obtain explicit
expressions for a large class of sums of powers of the form $\sum_{k=0}^{\tau_1}
k^{\alpha}$ with $\alpha \in \mathbb{Z}_+$.

We also note that if $r$ satisfies $\sup_k E_x\big[r(\tau_{k+1}-\tau_k) \mid
x_{\tau_k}\big]\le  M$ for some finite $M$, then by Jensen's inequality $r^{1/s}$
satisfies the  bound  $\sup_k E\big[r^{1/s}(\tau_{k+1}-\tau_k)\mid \clF_{\tau_k}\big]\leq
\lambda^{-1}$ if $s>1$ is large enough so that $M^{1/s}\leq
\lambda^{-1}$.

Suppose now that the sequence of stopping times are state-dependent but
deterministic, that is $\tau_{k+1}=\tau_k + n(x_{\tau_k}), \tau_0=0$.

\begin{corollary}
Let $\{x_t\}$ be an aperiodic and irreducible Markov chain with a small set
  $C$. Suppose there exist $V: \state \rightarrow [1,\infty)$ which is
  bounded on $C$ and with for some $\epsilon>0$, $\lambda \in (0,1)$, $\lambda V(x) \leq V(x)-\epsilon$ for all $x\notin C$, and $b \in \mathbb{R}$ such that for an increasing sequence of stopping times $\{\tau_n\}$
  \begin{align}
E\big[V(x_{\tau_{n+1}}) \mid {\cal F}_{\tau_n}] \leq \lambda{V(x_{\tau_n})} +b1_C(x_{\tau_n}).
\end{align}
Then for any $r \in \Lambda$ and $M>0$ that satisfy
\[\sum_{k=0}^{n(x_0)} r(k) \leq M, \quad r(n(x_0)) \leq {1 \over \lambda}, \quad x_0 \in \state,\]
then  $\{x_t\}$ satisfies Theorem \ref{Main Theorem Tweedie} with
$f=1$ and it  is $(1,r)$-ergodic.
\end{corollary}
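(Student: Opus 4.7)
The plan is to derive this corollary as an immediate specialization of \Theorem{ch5: r ergo with uni drift}. The key observation is that when the stopping times are state-dependent but deterministic, i.e.\ $\tau_{k+1}-\tau_k = n(x_{\tau_k})$, the gap $\tau_{k+1}-\tau_k$ is $\mathcal{F}_{\tau_k}$-measurable, so the conditional expectations appearing in hypotheses (\ref{DriftCond00}) and (\ref{DriftCond01}) collapse to deterministic expressions in $x_{\tau_k}$.

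More precisely, I would first record that the drift inequality of the corollary is exactly (\ref{DriftCond1}), so no work is needed there. Next, I would verify (\ref{DriftCond00}): since $n(x_{\tau_k})$ is known given $\mathcal{F}_{\tau_k}$,
\[
E\bigg[\sum_{n=\tau_k}^{\tau_{k+1}-1} r(n-\tau_k) \,\Big|\, \mathcal{F}_{\tau_k}\bigg]
= \sum_{m=0}^{n(x_{\tau_k})-1} r(m) \leq \sum_{m=0}^{n(x_{\tau_k})} r(m) \leq M,
\]
where the last inequality is the first hypothesis of the corollary applied with $x_0 = x_{\tau_k}$. Taking the supremum in $k$ and over $\omega$ yields (\ref{DriftCond00}). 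Then I would verify (\ref{DriftCond01}):
\[
E\big[r(\tau_{k+1}-\tau_k) \mid \mathcal{F}_{\tau_k}\big] = r(n(x_{\tau_k})) \leq \lambda^{-1},
\]
which is the second hypothesis. All three conditions of \Theorem{ch5: r ergo with uni drift} are now in force, so that theorem applies and gives that $\{x_t\}$ satisfies \Theorem{Main Theorem Tweedie}(i) with $f=1$ and is $(1,r)$-ergodic.

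There is no substantive obstacle here; the statement is essentially a direct dictionary translation of the random-time conditions into their deterministic counterparts. The only minor subtlety is that the corollary writes $\sum_{k=0}^{n(x_0)} r(k)$ (inclusive upper limit) while (\ref{DriftCond00}) produces a sum up to $n(x_{\tau_k})-1$; this causes no difficulty because the finite, non-decreasing nature of $r$ ensures the shorter sum is bounded by $M$ as well. Thus the proof is a short verification followed by a one-line invocation of the previous theorem.
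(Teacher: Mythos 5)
Your proposal is correct and takes exactly the approach the paper intends: the paper offers this corollary without proof, as a direct specialization of Theorem~\ref{ch5: r ergo with uni drift} to state-dependent deterministic stopping times $\tau_{k+1}=\tau_k+n(x_{\tau_k})$, and your verification that (\ref{DriftCond00}) and (\ref{DriftCond01}) collapse to the stated deterministic bounds (with the harmless off-by-one in the sum) is precisely the intended reduction.
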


We note that Theorem \ref{ch5: r ergo with uni drift} above is useful for
proving $(1,r)$-ergodicity and Theorem \ref{Prop: random time app} is really
only useful for proving $(f,1)$-ergodicity, where $r,f$ satisfy the respective
hypotheses. In order to be able to prove more rate results, we may use results by
Douc et al.\ \cite{Douc} on the class $\mathcal{Y}$ of pairs of ultimately non-decreasing functions defined in Section~\ref{Sec: Subgeometric Ergodicity}. If a
Markov chain $\{x_t\}$ satisfies Theorem \ref{ch5: r ergo with uni drift} with
$(1,r)$ and Theorem \ref{Prop: random time app} with $(f,1)$, then $\{x_t\}$
is $(\Psi_1 \circ f, \Psi_2 \circ r)$-ergodic for
$(\Psi_1,\Psi_2)\in \mathcal{Y}$ by Proposition \ref{prop: Y ergo ext}.

%
%

Before ending this section, we revisit a criterion by Connor and Fort
\cite{ConnorFort} who studied rates of convergence  under drift
criteria which are based on state-dependent but deterministic sampling times so that
\[P^{n(x)}V(x)\leq \lambda V(x)+b1_C(x)\]
where $n:\state\rightarrow \mathbb{Z}_+$ is the state dependent time where the drift condition is enforced. Now, consider the case where $n$ is random and we have a sequence of stopping times
$\{\tau_k\}$ defined as $\tau_{k+1}=\tau_k+n(x_{\tau_k})$ with $\tau_0=0$. Theorem 3.2(i) of
\cite{ConnorFort} can be partly generalized to the random-time case as follows.

\begin{theorem}
  Let $\{x_t\}$ be an aperiodic and irreducible Markov chain with a small set
  $C$. Suppose that the stopping times $\{\tau_n\}$ satisfy
the conditions of Lemma~\ref{ch5: random sampling preserves smallness} and that there exist a function $V: \state \rightarrow [1,\infty)$, $V$
  bounded on $C$, and constants $b \in \mathbb{R}$ and $\lambda \in (0,1)$ such
  that for an increasing sequence of stopping times $\{\tau_n\}$ with $\tau_0=0$,
\[
E_x[V(x_{\tau_{k+1}})\mid \clF_{\tau_k}]\leq \lambda
V(x_{\tau_k})+b1_C(x_{\tau_k}).
\]
If there exists a strictly increasing function $R:(0,\infty)\rightarrow
(0,\infty)$ such that $R(t)/t$ is non-increasing and $E[R(\tau_{k+1}-\tau_k)\mid
x_{\tau_k}] \leq V(x_{\tau_k})$, then there exists a constant $D$ such that
$E_x[R(\tau_C)]\leq DV(x)$. If in addition the invariant distribution $\pi$ of
$\{x_t\}$ exists, $\pi(V)<\infty$, then the Markov chain is $(1,R)$-ergodic.
\end{theorem}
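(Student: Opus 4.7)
The plan is to split the argument into two parts: (a) proving the moment bound $E_x[R(\tau_C)]\le DV(x)$, and (b) leveraging this bound together with $\pi(V)<\infty$ to obtain $(1,R)$-ergodicity. Both parts hinge on a subadditivity property for $R$ that is immediate from the hypothesis that $R(t)/t$ is non-increasing: writing $f(t)=R(t)/t$, for $s,t>0$,
\[
R(s+t)=(s+t)f(s+t)\le sf(s)+tf(t)=R(s)+R(t),
\]
so $R$ is subadditive. This lets me compare $R$ of a stopping time with the sum of $R$'s of its increments, paralleling the decomposition used in Theorem \ref{ch5: r ergo with uni drift}.

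For (a), I introduce the sampled hitting time $\gamma_C=\min\{n>0:x_{\tau_n}\in C\}$, so $\tau_C\le\tau_{\gamma_C}=\sum_{k=0}^{\gamma_C-1}(\tau_{k+1}-\tau_k)$. Monotonicity and subadditivity of $R$ give $R(\tau_C)\le \sum_{k=0}^{\gamma_C-1}R(\tau_{k+1}-\tau_k)$. Taking $E_x$, applying the tower property at each summand, and using the hypothesis $E[R(\tau_{k+1}-\tau_k)\mid\clF_{\tau_k}]\le V(x_{\tau_k})$ reduces the job to bounding $E_x[\sum_{k=0}^{\gamma_C-1}V(x_{\tau_k})]$. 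I would obtain this from the drift exactly as in Theorem \ref{ch5: r ergo with uni drift}: rewrite the drift as
\[
(1-\lambda)V(x_{\tau_k})\le V(x_{\tau_k})-E[V(x_{\tau_{k+1}})\mid\clF_{\tau_k}]+b1_C(x_{\tau_k}),
\]
sum over $k=0,\dots,\gamma_C^N-1$ with $\gamma_C^N=\min(N,\gamma_C)$, take expectations, apply optional stopping (licit since $V$ is bounded on $C$), and send $N\to\infty$ by monotone convergence. Since $x_{\tau_k}\in C$ only at $k=0$ on $\{k<\gamma_C\}$, the indicator sum contributes at most $1$, so $(1-\lambda)E_x[\sum_{k=0}^{\gamma_C-1}V(x_{\tau_k})]\le V(x)+b$. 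Using $V\ge 1$ yields $E_x[R(\tau_C)]\le DV(x)$ for an appropriate $D$.

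For (b), boundedness of $V$ on $C$ combined with (a) yields $\sup_{x\in C}E_x[R(\tau_C)]<\infty$, while $\pi(V)<\infty$ and Fubini give $E_\pi[R(\tau_C)]<\infty$. A natural first attempt is to define the telescoping rate $r(n):=R(n+1)-R(n)$, so that $\sum_{k=0}^{\tau_C-1}r(k)=R(\tau_C)-R(0)$; this produces $\sup_{x\in C}E_x[\sum_{k=0}^{\tau_C-1}r(k)]<\infty$, which via Lemma \ref{ch5: random sampling preserves smallness} (so $C$ remains petite) and Theorem \ref{Main Theorem Tweedie}(i)--Theorem \ref{theorem: ergodicity} gives $(1,r_0)$-ergodicity for a $\Lambda_0$-equivalent $r_0$ of $r$.

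The main obstacle I anticipate is the final step: promoting the ``difference'' rate $r=\Delta R$ to $R$ itself, since for polynomial $R(n)\sim n^\alpha$ with $\alpha\in(0,1]$ one has $r(n)\sim R(n)/n$, so naive telescoping loses a factor of $n$. To close this gap I would exploit the stationary regenerative representation: using $E_\pi[R(\tau_C)]<\infty$ (with $C$ petite for $\{x_t\}$, by Lemma \ref{ch5: random sampling preserves smallness}) and Nummelin splitting as reviewed in Section \ref{Splitting} to manufacture an atom, the coupling inequality applied at regeneration times yields a bound on $R(n)\|P^n(x,\cdot)-\pi\|_{TV}$ in terms of $E_x[R(\tau_C)]$ and $E_\pi[R(\tau_C)]$. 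Subadditivity of $R$ is what makes the regenerative bound close: it absorbs the triangle-type inequality $R(n)\le R(n-\tau_C)+R(\tau_C)$ inside the coupling argument, upgrading $(1,r_0)$-ergodicity to genuine $(1,R)$-ergodicity. Alternatively one may invoke Proposition \ref{prop: Y ergo ext} with an ultimately non-decreasing pair tailored to the growth of $R$, but I expect the direct regenerative route to be cleaner.
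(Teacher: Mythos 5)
Your part (a) is essentially identical to the paper's: the subadditivity $R(s+t)\le R(s)+R(t)$ (from $R(t)/t$ non-increasing), the bound $\tau_C\le\tau_{\gamma_C}$, the tower property, and the drift-comparison bound on $E_x\bigl[\sum_{k=0}^{\gamma_C-1}V(x_{\tau_k})\bigr]$ all match. The issue is part (b), and there are two problems.

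First, your ``first attempt'' via $r(n):=R(n+1)-R(n)$ is not merely lossy --- it is vacuous. For the interesting cases (e.g.\ $R(n)=n^{\alpha}$ with $\alpha\le 1$), $r$ is non-increasing with $r(n)\to 0$. The class $\Lambda$ requires a $\Lambda_0$-comparable rate, and $\Lambda_0$ consists of non-decreasing functions; there is no $\Lambda_0$-equivalent of a sequence tending to zero other than a constant, so Theorem~\ref{Main Theorem Tweedie} applied to this $r$ yields nothing beyond plain TV-ergodicity. Second, your proposed fix via Nummelin splitting and coupling at regeneration epochs is only gestured at. Subgeometric coupling bounds are genuinely delicate (the coupling time is a random geometric number of excursions, and controlling $E[R(T)]$ requires more than subadditivity); you would essentially be re-deriving the Tuominen--Tweedie machinery by hand rather than using it. The paper never leaves the drift framework: it uses the extra hypothesis $\pi(V)<\infty$ to invoke Theorem~14.2.11 of \cite{MeynBook} and manufacture a \emph{one-step} Lyapunov function $W(x)=E_x\bigl[\sum_{n=0}^{\sigma_A}V(x_n)\bigr]$ satisfying $PW\le W-V+M1_A$ for a petite set $A$. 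Combined with Theorem~\ref{theorem: petite set finite mean}, this gives $E_x\bigl[\sum_{k=0}^{\tau_C-1}V(x_k)\bigr]\le W(x)+Mc(C)$. The key step you are missing is then the observation that since $R$ is increasing, $R(k)\le E[R(\tau_C)\mid\clF_k]$ on $\{k<\tau_C\}$, and by part (a) applied at time $k$ this conditional expectation is at most $DV(x_k)$; hence
\[
E_x\biggl[\sum_{k=0}^{\tau_C-1}R(k)\biggr]\le D\,E_x\biggl[\sum_{k=0}^{\tau_C-1}V(x_k)\biggr]\le D\bigl(W(x)+Mc(C)\bigr),
\]
which, after checking $W$ is bounded on a suitable petite set, directly verifies Theorem~\ref{Main Theorem Tweedie}(i) with $(f,r)=(1,R)$. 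This is the mechanism that ``promotes'' $\Delta R$ to $R$: not a coupling argument, but the pointwise bound $R(k)\le DV(x_k)$ before hitting $C$, fed into a one-step drift for $V$ itself.
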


\begin{proof}
  Since $R(t)/t$ is non increasing, it follows that
  ${\log R(t) \over t} \to 0$ and $R \in \Lambda$. It also follows that $R(a+b)\leq
  R(a)+R(b)$ for any $a,b> 0$. With $R$ also increasing we have that
\begin{align*}
  E_x[R(\tau_{_C})] &\leq E_x[R(\tau_{\gamma_C})] =
  E_x\bigg[R(\sum_{k=0}^{\gamma_C-1}\tau_{k+1}-\tau_k)\bigg]\\
   & \leq
   E_x\bigg[\sum_{k=0}^{\gamma_C-1}R(\tau_{k+1}-\tau_k)\bigg] \\
   & = E_x\bigg[\sum_{k=0}^{\gamma_C-1}E_x\big[R(\tau_{k+1}-\tau_k)\mid
   \clF_{\tau_k}\big]\bigg]\\ &
 \leq E_x\bigg[\sum_{k=0}^{\gamma_C-1}V(x_{\tau_k})\bigg].
\end{align*}
With the drift condition $P^{\tau_{k+1}-\tau_k}V\leq V - (1-\lambda)V+b1_C$ and
$V$ bounded on $C$, we have
\[
(1-\lambda)E_x\bigg[\sum_{k=0}^{\gamma_C-1}V(x_{\tau_k})\bigg]\leq V(x)+b,
\]
where $\tau_0=0$ and so we obtain  $E_x[R(\tau_C)]<DV(x)$ for some $D>0$.

If the invariant distribution $\pi$ of $\{x_t\}$ exists and $\pi(V)<\infty$,
then by Theorem 14.2.11 of \cite{MeynBook} there exists a small set $A$ and $M \in \mathbb{R}$ such
that $\sup_{x\in A}E_x\big[\sum_{k=0}^{\tau_A-1}V(x_k)\big]<M$. Defining the
hitting time $\sigma_A=\min\{t\geq 0:x_t\in A\}$, the function
$W(x)=E_x\big[\sum_{n=0}^{\sigma_A}V(x_n)\big]$ satisfies the drift condition $PW\leq
W-V+M1_A$ with $A$ petite, and by Theorem \ref{theorem: petite set finite mean}
\begin{eqnarray}
&& E_x\bigg[\sum\limits_{k=0}^{\tau_B-1}V(x_k)\bigg] \nonumber \\
&& \leq W(x)+ME_x\bigg[\sum\limits_{k=0}^{\tau_B-1}1_A(x_k)\bigg]\leq W(x)+ M c(B) \nonumber
\end{eqnarray}

for any $B\in \clB^+(\state)$. Therefore, since $R$ is increasing,
\begin{eqnarray} \label{eq:W drift RT}
&& E_x\bigg[\sum\limits_{k=0}^{\tau_C-1}R(k)\bigg] \leq
E_x\bigg[\sum\limits_{k=0}^{\tau_C-1}E\big[R(\tau_C)\mid \clF_k\big]\bigg]  \nonumber \\
&& \leq
E_x\bigg[\sum\limits_{k=0}^{\tau_C-1}D V(x_k)\bigg]\leq D(W(x)+M c(C)).
\end{eqnarray}
To complete the proof, we show that $W$ is bounded on $C$. If the stopping times are independent and thus satisfy the conditions of
Lemma~\ref{ch5: random sampling preserves smallness}, then $C$ is petite for the
randomly sampled chain $\{x_{\tau_n}\}$ and the drift condition in the
hypothesis gives $E_x[R(\tau_B)]\leq (c(B) +1)V(x)$ for any $B\in
\clB^+(\state)$. Since $W$ satisfies a drift condition, $\{W< \infty\}$ is full
and absorbing and we can find a petite set in $\{W<\infty\}$.

Combining the above with (\ref{eq:W drift RT}) gives
\[
\sup_{x\in B}E_x\bigg[\sum\limits_{k=0}^{\tau_B-1}R(k)\bigg]\leq \sup_{x\in
  B} (c(B)+1)(W(x)+b)<\infty,
\]
for an appropriate petite set $B$ when the conditions of Lemma~\ref{ch5: random
  sampling preserves smallness} are satisfied. Thus  $\{x_t\}$ satisfies Theorem \ref{Main Theorem Tweedie}(i)
with  $(f,r)=(1,R)$ and it is  $(1,R)$ ergodic.
\end{proof}

\subsection{Geometric ergodicity }

We use the same reasoning as before to obtain geometric ergodicity from a random
time univariate drift condition.

\begin{theorem}\label{geoRandom}
  Let $\{x_t\}$ be an aperiodic and irreducible Markov chain with a small set
  $C$. If there exists a function $V: \state \rightarrow [1,\infty)$, $V$ bounded on $C$, constants
  $b\in \mathbb{R}$, $B>0$,  and $\lambda,\beta\in (0,1)$ such that for a sequence of stopping
  times $\{\tau_n\}$
\begin{align*}
  E[V(x_{\tau_{n+1}}) \mid {\cal F}_{\tau_n}] &\leq \lambda V(x_{\tau_n}) + b
  1_C(x_{\tau_n}),\\
  \intertext{and}
P(\tau_{n+1} - \tau_n =k \mid {x}_{\tau_n}) &\leq B \beta^k,
  \quad \text{for all $n,k$, and $x_{\tau_n} \notin C$}
\end{align*}
with \[{1 - B \lambda \over \beta} > 1,\]
and
\begin{eqnarray}\label{ilkBound}
\sup_{x \in C} E_x[a^{\tau_1}] < \infty
\end{eqnarray}
 for some $a > 1$, then ${x_t}$ is geometrically ergodic.
\end{theorem}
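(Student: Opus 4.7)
The plan is to apply Theorem~\ref{theorem: geometric ergodicity MT}(ii) to $\{x_t\}$: since $C$ is small (hence petite), geometric ergodicity follows once I produce some $\alpha>1$ with $\sup_{x\in C}E_x[\alpha^{\tau_C}]<\infty$. Because $\tau_C\leq\tau_{\gamma_C}$ where $\gamma_C:=\min\{n\geq 1:x_{\tau_n}\in C\}$, it suffices to bound $\sup_{x\in C}E_x[\alpha^{\tau_{\gamma_C}}]$. I will split this bound via Cauchy--Schwarz into (a) a geometric moment of $\gamma_C$, controlled by the drift on the sampled chain, and (b) a compensated exponential moment of $\tau_{\gamma_C}$, controlled by the geometric tail of the inter-arrival gaps.

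For (a), note that $\widetilde M_n:=\kappa^n V(x_{\tau_n})$ satisfies $E[\widetilde M_{n+1}\mid\mathcal{F}_{\tau_n}]\leq\kappa\lambda\,\widetilde M_n+\kappa^{n+1}b\,1_C(x_{\tau_n})$; for $\kappa\in(1,\lambda^{-1})$ this is a supermartingale away from $C$, and optional stopping at $\gamma_C$ (where only the $n=0$ boundary term contributes, since $x_{\tau_0}\in C$) yields $E_x[\kappa^{\gamma_C}V(x_{\tau_{\gamma_C}})]\leq V(x)+\kappa b$. Since $V\geq 1$ is bounded on $C$, $\sup_{x\in C}E_x[\kappa^{\gamma_C}]<\infty$ for every $\kappa\in(1,\lambda^{-1})$. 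For (b), the tail bound gives, for $x_{\tau_n}\notin C$ and $\alpha\beta<1$, $E[\alpha^{\tau_{n+1}-\tau_n}\mid\mathcal{F}_{\tau_n}]\leq\sum_{k\geq 1}\alpha^k B\beta^k=B\alpha\beta/(1-\alpha\beta)=:K(\alpha)$. Hence $N_n:=\alpha^{2\tau_n}K(\alpha^2)^{-n}$ satisfies $E[N_{n+1}\mid\mathcal{F}_{\tau_n}]\leq N_n$ on $\{x_{\tau_n}\notin C\}$, and optional sampling at $\gamma_C$, with (\ref{ilkBound}) used to control the first jump from $x_{\tau_0}\in C$, produces
\[
E_x\bigl[\alpha^{2\tau_{\gamma_C}}K(\alpha^2)^{-\gamma_C}\bigr]\leq\frac{A}{K(\alpha^2)},\qquad A:=\sup_{x\in C}E_x[\alpha^{2\tau_1}],
\]
with $A<\infty$ provided $\alpha^2\leq a$.

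Cauchy--Schwarz then gives
\[
E_x[\alpha^{\tau_{\gamma_C}}]\leq\sqrt{E_x\bigl[\alpha^{2\tau_{\gamma_C}}K(\alpha^2)^{-\gamma_C}\bigr]}\cdot\sqrt{E_x\bigl[K(\alpha^2)^{\gamma_C}\bigr]}\leq\sqrt{A/K(\alpha^2)}\cdot\sqrt{E_x\bigl[K(\alpha^2)^{\gamma_C}\bigr]}.
\]
I then choose $\alpha>1$ with $\alpha^2\leq a$, $\alpha^2\beta<1$, and $K(\alpha^2)<\lambda^{-1}$; the last condition rewrites as $\alpha^2\beta(1+B\lambda)<1$. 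The hypothesis $(1-B\lambda)/\beta>1$ is exactly $\beta+B\lambda<1$, which implies $\beta(1+B\lambda)=\beta+\beta B\lambda\leq\beta+B\lambda<1$, so such an $\alpha>1$ exists. For that $\alpha$, by (a) the factor $E_x[K(\alpha^2)^{\gamma_C}]$ is uniformly bounded on $C$, and hence $\sup_{x\in C}E_x[\alpha^{\tau_C}]\leq\sup_{x\in C}E_x[\alpha^{\tau_{\gamma_C}}]<\infty$, from which Theorem~\ref{theorem: geometric ergodicity MT}(ii) yields geometric ergodicity.

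The main obstacle is the rigorous optional sampling of the exponential supermartingale $N_n$ at the random time $\gamma_C$; I expect to handle the integrability by the standard truncation $N_{n\wedge\gamma_C\wedge T}$ together with monotone convergence as $T\to\infty$, using the uniform conditional bound by $K(\alpha^2)$ (together with $\gamma_C<\infty$ almost surely from (a)) to pass to the limit.
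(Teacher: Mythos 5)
Your proof is correct, and it follows the same overall strategy as the paper's: exploit $\tau_C\le\tau_{\gamma_C}$, bound $\sup_{x\in C}E_x[\alpha^{\tau_{\gamma_C}}]$ by combining the geometric moment of the sampled hitting index $\gamma_C$ (from the drift on $\{x_{\tau_n}\}$) with the geometric tails of the inter-sample gaps, and then invoke Theorem~\ref{theorem: geometric ergodicity MT}(ii). Where you genuinely diverge is in how you combine the two sources of geometric growth. The paper's proof applies iterated expectations to obtain $E_x[\rho^{\tau_{\gamma_C}}] < E_x\bigl[\lambda^{-(\gamma_C-1)}\rho^{\tau_1}\bigr]$ and then simply asserts uniform boundedness. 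That last step is delicate for two reasons that the paper does not address: $\lambda^{-1}$ sits at the boundary of the range $(1,\lambda^{-1})$ where $\sup_{x\in C}E_x[r^{\gamma_C}]$ is known to be finite, and $\gamma_C$ and $\tau_1$ are not independent, so the joint expectation needs an explicit argument. Your Cauchy--Schwarz decoupling handles both issues cleanly: you deliberately force $K(\alpha^2)<\lambda^{-1}$ strictly (using $\beta+B\lambda<1$, which gives the needed slack $\beta(1+B\lambda)<1$), and the product factorizes into two separately controllable expectations, with (\ref{ilkBound}) used only to seed the first jump from $C$. Your approach is thus a more rigorous implementation of the same idea; the trade-off is that squaring $\alpha$ forfeits some of the rate, but the theorem only claims geometric ergodicity, not a sharp rate, so nothing is lost. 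One small bookkeeping point worth making explicit in a final write-up: the supermartingale properties of $\widetilde M_n$ and $N_n$ hold only for $n\ge 1$ (since $x_{\tau_0}\in C$), so the optional-stopping bounds should be anchored at $n=1$ via the separate estimates $E_x[\widetilde M_1]\le V(x)+\kappa b$ and $E_x[N_1]\le A/K(\alpha^2)$ as you have done; the truncation-plus-Fatou passage to $\gamma_C$ you sketch at the end then closes the argument.
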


\begin{proof}
   By Theorem \ref{theorem: geometric ergodicity MT} for $r \in (1,\lambda^{-1})$
\[\sup_{x\in C} E_x[r^{\gamma_C}] < \infty. \]
Let $\rho \in (1, {1 - B \lambda \over \beta})$. Then,
\begin{eqnarray}
&&  E_x\big[\rho^{\tau_{n+1}-\tau_n} | {\cal F}_{\tau_n} \big] \leq {B \over 1 - \rho \beta} < \lambda^{-1}
\end{eqnarray}
for $x \notin C$. By a use of iterated expectations
\begin{equation}\label{ch5: expect rho tauA}
  E_x[\rho^{\tau_{\gamma_C}}]= E_x\big[ \prod_{n=0}^{\gamma_C-1} \rho^{\tau_{n+1} - \tau_n} \big] < E_x\big[\lambda^{-(\gamma_C-1)} \rho^{\tau_1} \big].
\end{equation}
By letting $1 < \rho < \min(a, {1 - B \lambda \over \beta})$, we obtain that $C\in \clB^+(\state)$ is a small set with a uniformly bounded $E_x[\rho^{\tau_{\gamma_C}}]$ for $x \in C$. Therefore by Theorem \ref{theorem: geometric ergodicity MT} the chain $\{x_t\}$ is geometrically ergodic.
 \end{proof}

 We also note that the rate of ergodicity relies on the constants $m$ and $\delta$
 for some $(m,\delta,\nu)$-small set $C$, so the ergodicity rate cannot be made
 explicit using only the information in the drift
 condition. 


\section{An Example in Networked Control}\label{SectionE}
We revisit the motivating example in \cite{YukMeynTAC2010}, concerning the stabilization problem over erasure channels. In particular, we apply the results of the previous section to establish a rate of convergence to equilibrium provided that the information transmission rate satisfies a certain inequality. We consider a scalar LTI discrete-time system described by
\begin{eqnarray}
\label{ProblemModel4CHP9}
x_{t+1}=ax_{t} + bu_{t} + w_t, \quad \quad t \ge 0,
\end{eqnarray}
where $x_t$ is the state at time $t$, $u_t$ is the control input, the initial
state $x_0$ is a random variable with a finite second moment, and $\{w_t \}$ is a
sequence of zero-mean i.i.d.\ Gaussian random variables, also independent of
$x_0$. We assume that the system is open-loop unstable and controllable, that
is, $|a| \geq 1$ and $b \neq 0$. This system is connected over a noisy channel
to a controller, as shown in Figure~\ref{LLL1}. The channel is assumed to have
finite input alphabet $\mathcal{M}$ and finite output alphabet $\mathcal{M}'$. A
source coder maps the source symbols (state values) to corresponding channel
inputs. The quantizer outputs are transmitted through the channel, after passing
through a channel encoder. The receiver has access to noisy versions of the
quantizer/coder outputs for each time instant $t$, which we denote by $q'_t \in
{\cal M}'$.

The problem is to identify conditions on the channel so that there exist coding and control schemes leading to the stochastic stability of the controlled process. For a thorough review of such problems with necessity and sufficiency conditions, see \cite{YukselBasarBook}.

\begin{figure}[h]
        \begin{center}
        \includegraphics[height=2.5cm,width=8cm]{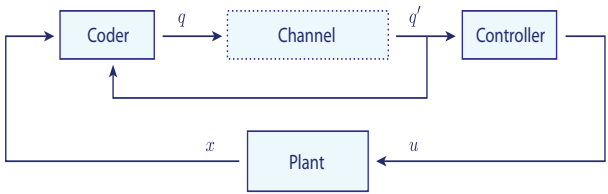}
\caption{Control of a system over a noisy channel. \label{LLL1}}
        \end{center}
\end{figure}
The source output is quantized
as follows:
\begin{eqnarray}
Q_K^{\Delta}(x) = \begin{cases}   (k - \half (K + 1) ) \Delta,
	 \nonumber \\
\quad \quad \quad \quad \quad \quad \mbox{if} x \in [ (k-1-\half K   ) \Delta , (k-\half K  ) \Delta)  \\
 (\half (K - 1) ) \Delta,  \quad \quad
	\mbox{if} \ \ x = \half K \Delta \\
  0 ,  \quad \quad
  	\mbox{if} \ \ x \not\in [- \half K  \Delta, \half K  \Delta]
\end{cases} \label{UniformQuantizerDescriptionCHP8}
\end{eqnarray}
where $K$ is a positive integer.  The quantizer outputs are transmitted through
a memoryless erasure channel, after being subjected to a bijective mapping,
which is performed by the channel encoder. At time $t$, the channel encoder
$\mathcal{E}_t$ maps
the quantizer output symbols to corresponding channel inputs $q_t\in
\clM\eqdef\{1,2\dots,K+1\}$ so  that ${\cal
  E}_t(Q_t(x_t))=q_t$. The controller/decoder has access to noisy
versions of the encoder outputs $q'_t \in
\mathcal{M}'\coloneqq \clM \cup \{e\}$, with $e$ denoting the erasure symbol, generated according to a
probability distribution for every fixed $q \in \clM$. The channel transition
probabilities are given by
\[
P(q'=i|q=i) = p, \quad \quad P(q'=e|q=i) = 1-p, \quad \quad i \in {\cal M}.
\]
At each time $t$, the controller/decoder applies a mapping
${\cal D}_t: \clM \cup \{e\} \to \mathbb{R}$, given by
\[{\cal D}_t(q'_t) = {\cal E}_t^{-1}(q'_t) \times 1_{\{q'_t \neq e\}} + 0 \times
1_{\{q'_t=e\}}.\] Let $\{\Upsilon_t\}$ denote the sequence of i.i.d.\ binary
random variables, representing the erasure process in the channel, where the
event $\Upsilon_t=1$ indicates that the signal is transmitted with no error
through the channel at time $t$. Let $p=\Expect[\Upsilon_t]$ denote the
probability of success in transmission. The following key assumptions are
imposed: Given $K\ge 2$ introduced in the definition of the quantizer, define
the \textit{rate variable}s
\begin{equation}
R = \log_2(K+1) \quad \quad R'=\log_2(K) \label{e:Rbdd}
\end{equation}
We fix positive scalars $\delta$ and $\alpha$ satisfying $|a| 2^{-R'} < \alpha <
1$ and $\alpha (|a|+\delta)^{p^{-1}-1} < 1.$ With $L>0$ a constant, let
$\bar{Q}: \mathbb{R} \times \mathbb{R} \times \{0,1\} \to \mathbb{R}$ be defined
as
\[
 \bar{Q}(\Delta,h,p) =
 \begin{cases}
   |a| + \delta, & \text{ if } \quad |h| > 1,  \mbox{ or } p = 0 \\
   \alpha,  &  \text{ if } 0 \leq |h| \leq 1,\;  p=1,\;  \Delta > L   \\
  1,  & \text{ if }  0 \leq |h| \leq 1,\; p=1, \; \Delta \leq L  .
\end{cases}
\]
For each $t \ge 0$ and with
$\Delta_0 \in \Re$ selected arbitrarily, let
\begin{eqnarray}
\label{QuantizerUpdate22}
u_t &=& - {a \over b} \hat{x}_{t}, \nonumber \\
\hat{x}_t &=& {\cal D}_t(q'_t) = \Upsilon_t Q_K^{\Delta_t}(x_t), \nonumber \\
\Delta_{t+1} &=& \Delta_t \bar{Q}(\Delta_t,|{ x_{t} \over \Delta_t 2^{R'-1} }|,\Upsilon_t).
\end{eqnarray}
Given the channel output $q'_t \neq e$, the controller can simultaneously deduce
the realization of $\Upsilon_t$ and the event $\{|h_t| > 1\}$, where $h_t =
\frac{ x_t}{ \Delta_t 2^{R'-1}}$. This is due to the fact that if the channel
output is not the erasure symbol, the controller knows that the signal is
received with no error. If $q'_t=e$, however, then the controller applies $0$ as
its control input and enlarges the bin size of the quantizer.

By Lemma 3.1 of \cite{YukMeynTAC2010}, $(x_t, \Delta_t)$ is a Markov chain.

Consider now a sequence of stopping times which denote the times when there is a successful transmission of a source symbol in the {\it granular region} of the quantizer:
\begin{eqnarray}
\tau_0 = 0, \quad \tau_{z+1} &=& \inf \{k > \tau_z : |h_{k}| \leq  1, p_k=1 \}, \quad z \in \mathbb{Z}_+ \nonumber 
\end{eqnarray}

By Proposition 3.1 of \cite{YukMeynTAC2010}, the stopping time distribution is bounded uniformly by a geometric measure:

\begin{lemma}[\protect{\cite[Proposition 3.1]{YukMeynTAC2010}}]\label{stopBound}
  The discrete probability measure $\Prob(\tau_{i+1}-\tau_i=k\mid x_{\tau_i
  },\Delta_{\tau_i })$ satisfies,
\[
(1-p)^{k-1} \leq \Prob(\tau_{i+1}-\tau_i \geq k \mid x_{\tau_i },\Delta_{\tau_i }) \leq (1-p)^{k-1} +  o(1),
\]
where $o(1) \to 0$ as $\Delta_{\tau_i} \to \infty$ uniformly in $x_{\tau_i}$.
\end{lemma}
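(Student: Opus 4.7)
The plan is to analyze the event $\{\tau_{i+1}-\tau_i \geq k\}$ directly from the definition of $\tau_{i+1}$, which requires that for each $j \in \{1,\dots,k-1\}$ the pair $(|h_{\tau_i+j}|, \Upsilon_{\tau_i+j})$ is not equal to an element of $\{(h,1): |h|\leq 1\}$. The lower bound is the easy direction: since the erasure sequence $\{\Upsilon_t\}$ is i.i.d.\ with $\Prob(\Upsilon_t=0)=1-p$ and is independent of $\clF_{\tau_i}$, the event $\{\Upsilon_{\tau_i+1}=\cdots=\Upsilon_{\tau_i+k-1}=0\}$ is contained in $\{\tau_{i+1}-\tau_i \geq k\}$ and has conditional probability exactly $(1-p)^{k-1}$. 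This gives the lower bound without any use of $\Delta_{\tau_i}$.

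For the upper bound, I would decompose $\{\tau_{i+1}-\tau_i \geq k\}$ as the disjoint union of (a) the all-erasure path on steps $\tau_i+1,\dots,\tau_i+k-1$, which contributes exactly $(1-p)^{k-1}$, and (b) the paths on which at least one step $\tau_i+j$ has $\Upsilon_{\tau_i+j}=1$ but $|h_{\tau_i+j}|>1$. The task is to show the probability of (b), conditioned on $(x_{\tau_i},\Delta_{\tau_i})$, vanishes as $\Delta_{\tau_i}\to\infty$, uniformly in $x_{\tau_i}$.

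The key computation controlling (b) is how $h_t=x_t/(\Delta_t 2^{R'-1})$ evolves during a run of erasures starting from $\tau_i$. At $\tau_i$ we have $|h_{\tau_i}|\leq 1$ by definition of the stopping time. During an erasure step the controller sets $\hat x_t=0$, so $x_{t+1}=a x_t + w_t$, and $\Delta_{t+1}=(|a|+\delta)\Delta_t$. Dividing gives
\[
h_{t+1} \;=\; \frac{a}{|a|+\delta}\, h_t \;+\; \frac{w_t}{(|a|+\delta)\,\Delta_t\, 2^{R'-1}}.
\]
Iterating from $\tau_i$ and using that $|a|/(|a|+\delta)<1$, one obtains a deterministic contraction of the $h_t$-component plus a noise term whose standard deviation is bounded by a constant times $1/\Delta_{\tau_i}$ (since $\Delta_t \geq \Delta_{\tau_i}$ throughout the erasure run). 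Hence the probability that $|h_{\tau_i+j}|>1$ at some $j \leq k-1$ is bounded above by a Gaussian tail probability of the form $c_k\cdot \Prob(|Z|>\eta \Delta_{\tau_i})$ for a standard normal $Z$ and a constant $\eta>0$, independent of $x_{\tau_i}$. This bound tends to zero as $\Delta_{\tau_i}\to\infty$, uniformly in $x_{\tau_i}$, which is exactly the $o(1)$ claim.

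The main obstacle I anticipate is making the contraction argument both \emph{uniform in $x_{\tau_i}$} and valid over the full (random) number of erasure steps before the next successful transmission. The uniformity in $x_{\tau_i}$ is not immediate because (b) includes paths containing a successful step at some $j<k-1$ where $|h|$ could have grown; however, on the event that all previous steps were erasures, $|h_{t}|$ remains bounded by the recursion above, and on any other sample path the first success before step $j$ would already have ended the inter-arrival time unless $|h|>1$ at that success, bringing us back to the same Gaussian tail estimate. Assembling these pieces and summing over $j=1,\dots,k-1$ yields a single $o(1)$ remainder, as claimed.
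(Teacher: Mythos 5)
Your reconstruction is correct; note, however, that the paper states this lemma as a citation to Proposition~3.1 of \cite{YukMeynTAC2010} and does not supply an in-paper proof, so there is no text to compare against literally. Your argument is sound and is the natural one. The one simplification worth making explicit, and which dissolves the obstacle you flag at the end: on \emph{every} step $t$ with $\tau_i\le t<\tau_{i+1}$ the controller applies $u_t=0$ and sets $\Delta_{t+1}=(|a|+\delta)\Delta_t$, regardless of whether the step is an erasure ($\Upsilon_t=0$) or an overflow ($\Upsilon_t=1$, $|h_t|>1$), since any success with $|h_t|\le 1$ would by definition terminate the inter-arrival interval. Hence the contraction
\[
h_{t+1}=\frac{a}{|a|+\delta}\,h_t+\frac{w_t}{(|a|+\delta)\,\Delta_t\, 2^{R'-1}}
\]
holds on the \emph{entire} range $\tau_i\le t<\tau_{i+1}$, not merely during an initial erasure run. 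Starting from $|h_{\tau_i}|\le 1$, using $|a|/(|a|+\delta)<1$ and $\Delta_{\tau_i+\ell}=(|a|+\delta)^{\ell}\Delta_{\tau_i}$, the deterministic part of $h_{\tau_i+j}$ has modulus at most $|a|/(|a|+\delta)<1$ and the accumulated Gaussian noise has standard deviation $O(1/\Delta_{\tau_i})$, with the implied constant uniform in $j$ (the geometric series in $(|a|+\delta)^{-2}$ converges) and in $x_{\tau_i}$. A union bound over $j=1,\dots,k-1$ then delivers the $o(1)$ term uniformly in $x_{\tau_i}$, and the lower bound via the all-erasure event is exactly as you write.
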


As a consequence, the probability $\Prob(\tau_{i+1}-\tau_i \geq k \mid x_{\tau_i },\Delta_{\tau_i })$ tends to $ (1-p)^{k-1} p$ as $\Delta_{\tau_i} \to \infty$. 

\begin{theorem}
\label{FiniteMoment}
Suppose that
\begin{eqnarray}\label{2ndMomentNec}
	a^2 \Bigl(1-p + {p \over (2^{R}-1)^2} \Bigr) < 1.
\end{eqnarray}
Then, under the coding and control policy considered, the chain $(x_t, \Delta_t)$ is geometrically ergodic.
\end{theorem}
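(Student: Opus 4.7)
My plan is to apply Theorem~\ref{theorem: geometric ergodicity MT}(ii) to the $(x_t,\Delta_t)$ chain, i.e., to exhibit a petite set $C$ and $\kappa>1$ with $\sup_{x\in C}E_x[\kappa^{\tau_C}]<\infty$. I take $V(x,\Delta)=1+\Delta^2$ at the sampled times $\{\tau_i\}$ and $C=\{(x,\Delta):\Delta\le\bar\Delta\}$ for a threshold $\bar\Delta>L$ to be chosen; petiteness of $C$ for the $(x_t,\Delta_t)$ chain follows from the Gaussian driving noise as in \cite{YukMeynTAC2010}. A direct application of Theorem~\ref{geoRandom} is too weak here because its condition $(1-B\lambda)/\beta>1$ (with $\beta=1-p$ from the geometric tail) forces $\lambda<1-p$, which on this example translates to the strictly stronger requirement $a^2[p/(K^2(1-p))+(1-p)]<1$; instead I will construct a \emph{twisted} drift inequality that pairs $V$ with the exponential moment $\kappa^{\tau_{i+1}-\tau_i}$ and that extracts the hypothesis exactly.

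For $x_{\tau_i}\notin C$ the quantizer update yields the exact recursion $\Delta_{\tau_{i+1}}=\alpha(|a|+\delta)^{\tau_{i+1}-\tau_i-1}\Delta_{\tau_i}$, hence
\[
E\bigl[\kappa^{\tau_{i+1}-\tau_i}\Delta_{\tau_{i+1}}^2\,\big|\,\clF_{\tau_i}\bigr]=\frac{\alpha^2\Delta_{\tau_i}^2}{(|a|+\delta)^2}\,E\bigl[(\kappa(|a|+\delta)^2)^{\tau_{i+1}-\tau_i}\,\big|\,\clF_{\tau_i}\bigr].
\]
By Lemma~\ref{stopBound}, as $\Delta_{\tau_i}\to\infty$ the gap $\tau_{i+1}-\tau_i$ is asymptotically $\mathrm{Geom}(p)$, so the inner MGF converges to $\kappa(|a|+\delta)^2p/[1-\kappa(|a|+\delta)^2(1-p)]$ provided $\kappa(|a|+\delta)^2(1-p)<1$. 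The asymptotic contraction factor is
\[
\tilde\lambda(\kappa)=\frac{\alpha^2\kappa p}{1-\kappa(|a|+\delta)^2(1-p)},
\]
which is strictly less than $1$ exactly when $\kappa\bigl[\alpha^2p+(|a|+\delta)^2(1-p)\bigr]<1$. Under the hypothesis $a^2[(1-p)+p/(2^R-1)^2]<1$ and the design freedom $\alpha\searrow|a|/(2^R-1)=|a|/K$, $\delta\searrow 0$, we have $\alpha^2p+(|a|+\delta)^2(1-p)<1$, so some $\kappa>1$ satisfies $\tilde\lambda(\kappa)<1$. Choosing $\bar\Delta$ large enough that the $o(1)$ error of Lemma~\ref{stopBound} is dominated by the gap $1-\kappa[\alpha^2p+(|a|+\delta)^2(1-p)]>0$ then yields the valid twisted drift
\[
E\bigl[\kappa^{\tau_{i+1}-\tau_i}V(x_{\tau_{i+1}})\,\big|\,\clF_{\tau_i}\bigr]\le\tilde\lambda\,V(x_{\tau_i})+b\cdot 1_C(x_{\tau_i}).
\]

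The rest is standard: the sequence $\tilde\lambda^{-n}\kappa^{\tau_n}V(x_{\tau_n})$ is a supermartingale on $\{n<\gamma_C\}$ (with $\gamma_C$ the sampled-chain first return to $C$), so Doob's optional sampling applied at $\gamma_C^N=\min(N,\gamma_C)$ and monotone convergence give $E_x[\kappa^{\tau_{\gamma_C}}\tilde\lambda^{-\gamma_C}V(x_{\tau_{\gamma_C}})]\le V(x)$. Since $V\ge 1$, $\tilde\lambda^{-1}>1$, and $\tau_C\le\tau_{\gamma_C}$, this yields $E_x[\kappa^{\tau_C}]\le V(x)$ for $x\notin C$. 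For $x\in C$, Lemma~\ref{stopBound} together with the monotone growth of $\Delta_t$ between stopping times yields $E_x[\kappa^{\tau_1}]<\infty$ uniformly (after reducing $\kappa$ if necessary), which combined with the strong Markov property gives $\sup_{x\in C}E_x[\kappa^{\tau_C}]<\infty$. Theorem~\ref{theorem: geometric ergodicity MT}(ii) then delivers geometric ergodicity.

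The main obstacle is the passage from the merely asymptotic statement of Lemma~\ref{stopBound} to the uniform twisted drift inequality on $C^c$. The MGF $E[(\kappa(|a|+\delta)^2)^{\tau_{i+1}-\tau_i}]$ depends on the entire distribution of the gap, while Lemma~\ref{stopBound} provides only a pointwise bound with a $\Delta$-dependent remainder; one must revisit its proof in \cite{YukMeynTAC2010} to ensure the error decays uniformly in $k$, so that its aggregate contribution to the MGF remains below the strict contraction margin $1-\kappa[\alpha^2p+(|a|+\delta)^2(1-p)]$ for every $\Delta_{\tau_i}>\bar\Delta$.
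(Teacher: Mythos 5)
Your route differs genuinely from the paper's, and in a useful way. The paper's proof simply invokes Theorem~\ref{geoRandom}: it pulls the sampled geometric drift $\Expect[V(x_{\tau_{z+1}},\Delta_{\tau_{z+1}})\mid\clF_{\tau_z}]\le(1-\epsilon)V+b1_C$ (with $V=\Delta^2$ and $\epsilon<1-p\alpha^2/(1-(1-p)(|a|+\delta)^2)$) from Theorem~3.2 of \cite{YukMeynTAC2010}, the geometric tail from Lemma~\ref{stopBound}, and (\ref{ilkBound}) from (34) of \cite{YukMeynTAC2010}, then asserts that $(\lambda,\beta,B)=(1-\epsilon,\,1-p,\,B\in(p,p/\lambda))$ satisfies the hypotheses of Theorem~\ref{geoRandom}. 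You bypass Theorem~\ref{geoRandom} by folding the tilt $\kappa^{\tau_{i+1}-\tau_i}$ into a single twisted drift inequality, running the supermartingale/optional-stopping argument by hand, and concluding via Theorem~\ref{theorem: geometric ergodicity MT}(ii); both routes ultimately aim at $\sup_{x\in C}E_x[\kappa^{\tau_C}]<\infty$. Your critique of the direct route is well founded: the pmf bound $P(\tau_{n+1}-\tau_n=k\mid\cdot)\le B\beta^k$ in Theorem~\ref{geoRandom}, applied with $\beta=1-p$ to a gap asymptotically $(1-p)^{k-1}p$ on $\{1,2,\dots\}$, requires $B\ge p/(1-p)$, not merely $B>p$; paired with $(1-B\lambda)/\beta>1$ this forces $\lambda<1-p$, strictly stronger than the $\lambda<1$ that (\ref{2ndMomentNec}) supplies. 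The paper's stated $B\in(p,p/\lambda)$ then cannot satisfy the pmf hypothesis when $\lambda>1-p$. The root issue is that the $B\beta^k$ parametrization is lossy; your direct generating-function computation, $\tilde\lambda(\kappa)=\alpha^2\kappa p/\bigl(1-\kappa(|a|+\delta)^2(1-p)\bigr)$, is sharp and recovers exactly (\ref{2ndMomentNec}) as $\alpha\downarrow|a|/K$, $\delta\downarrow0$. Two details to tighten: (a) the strong-Markov step for $x\in C$ needs a uniform bound on $E_x\bigl[\kappa^{\tau_1}V(x_{\tau_1})\bigr]$, not just on $E_x[\kappa^{\tau_1}]$, because after time $\tau_1$ you invoke $E_{x_{\tau_1}}[\kappa^{\tau_{\gamma_C}}]\le V(x_{\tau_1})$ and the factor $\kappa^{\tau_1}$ is not independent of $V(x_{\tau_1})$; (b) the $o(1)$ in Lemma~\ref{stopBound} must be shown summable against $(\kappa(|a|+\delta)^2)^k$ uniformly for $\Delta_{\tau_i}$ large before the twisted drift is valid on $C^c$ --- you flag this yourself, correctly, and it does require revisiting the proof of Lemma~\ref{stopBound} in \cite{YukMeynTAC2010} rather than taking the lemma's pointwise-in-$k$ form at face value.
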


\begin{proof}
By the proof of Theorem 3.2 of \cite{YukMeynTAC2010}, with $V(x,\Delta)=\Delta^2$, and
 with \[0 < \epsilon < 1 - {p \alpha^2 \over 1 - (1-p)(|a|+\delta)^2},\] $b< \infty$ and $C$ a small set, it follows that
\begin{eqnarray}
&& \Expect[V(x_{\tau_{z+1}},\Delta_{\tau_{z+1}}) \mid x_{\tau_z}]  \nonumber \\
&& \leq (1 - \epsilon)  V(x_{\tau_{z}},\Delta_{\tau_{z}}) + b1_{\{(x_{\tau_{z}},\Delta_{\tau_{z}}) \in C\}}.
\end{eqnarray}
Together with Lemma \ref{stopBound}, and (34) in \cite{YukMeynTAC2010} leading to (\ref{ilkBound}), these imply that for some $\lambda=1-\epsilon \in (0,1)$, $B \in (p, {p \over \lambda})$ for some sufficiently large $C$, and $\beta = 1-p$, Theorem \ref{geoRandom} holds. 
\end{proof}

\begin{remark}
We recall from \cite{YukMeynTAC2010} that under (\ref{2ndMomentNec}), the system is quadratically stable in the sense that for each initial condition $(x_0,\Delta_0)$, $\lim_{t \to \infty} \Expect[x_t^2]  = \Expect_\pi[x_0^2] < \infty$. We also note that by \cite{YukMeynTAC2010} if the goal is to only have the existence of an invariant probability measure, the requirements on the channel reduce to the conditions that $|a| 2^{-R'} < \alpha < 1$ and $\alpha (|a|+\delta)^{p^{-1}-1} < 1.$
\end{remark}
%

\section{Conclusion}

In this paper, we established random-time state-dependent drift criteria for
Markov chains using Lyapunov-theoretic methods. We established drift criteria
both for sub-geometric and geometric rates of convergence, where the conditions revealed the relationship between the distributions of the stopping times, the drift of the Lyapunov functions at random times, and the ergodicity rates. Future work
includes the application of these results in event triggered control systems, as
well as information theory problems for variable-length decoding.


\end{document}